\documentclass[letter]{article}
\usepackage[utf8]{inputenc}

\usepackage{graphicx}
\usepackage[]{geometry}

\usepackage{amsmath,amsthm,amsfonts,amssymb,amscd, dsfont}
\usepackage{lastpage}
\usepackage{enumerate}
\usepackage{fancyhdr}
\usepackage{mathrsfs}
\usepackage{xcolor}
\usepackage{listings}
\usepackage{hyperref}
\usepackage{imakeidx}
\usepackage{multirow}
\usepackage{float}
\usepackage{MnSymbol}
\usepackage{wasysym}
\usepackage{tikz}
\usepackage{tikz-cd}
\usepackage[T1]{fontenc}

\usepackage{authblk}
\usepackage{dirtytalk}

\newcommand{\mylabel}[2]{#2\def\@currentlabel{#2}\label{#1}}

\newcommand{\conv}[2]{\mathop{\underset{#2}{\overset{#1}\longrightarrow}}}

\newcommand{\maths}[1]{\[\begin{split}{#1}\end{split}\]}

\newcommand{\inv}{^{-1}}

\newcommand{\maps}[1]{\mathop{\overset{#1}\longrightarrow}}

\newcommand{\con}[2]{\{  {#1}\;|\; {#2} \} }
\newcommand{\Con}[2]{\big\{  {#1}\;\big|\; {#2} \big\} }
\newcommand{\COn}[2]{\Big\{  {#1}\;\Big|\; {#2} \Big\} }

\newcommand{\ku}{\mathcal}

\newtheorem{definition}{{Definition}}
\newtheorem{theorem}{{Theorem}}
\newtheorem{corollary}{{Corollary}}

\newtheorem{proposition}{{Proposition}}

\newtheorem{lemma}{{Lemma}}
\newtheorem{question}{{Question}}
\newtheorem{example}{{Example}}

\theoremstyle{definition}

\title{Coarse Structures on Homogeneous Spaces}

\author{Carlos P\'erez Estrada and Christian Rosendal} 

\date{}

\begin{document}

\maketitle

\begin{abstract}
Given a closed normal subgroup $H$ of a topological group $G$, we address the question of whether the left coarse structure on the quotient group $G/H$ equals the quotient of the left coarse structure on $G$.  We provide a counterexample among Polish groups, namely, the mapping class group of the Loch Ness monster surface seen as a quotient of the mapping class group of the punctured Loch Ness monster surface, and establish both equivalent and sufficient conditions for when this holds in special settings. The latter are formulated in terms of liftings of bounded sets, existence of transversals and  metrisability of the left coarse structure of $G$ restricted to $H$.
\end{abstract}

\section{Introduction}
The large-scale geometry of a topological group $G$ is naturally encoded by its left coarse structure $\ku E_L(G)$, which captures the behaviour of continuous isometric actions on metric spaces and extends the role played by word metrics in geometric group theory. This construction is functorial, in the sense that the operator $\ku E_L$ defines a functor between the categories of topological groups and coarse spaces, so not surprisingly a number of questions regarding the coarse structure of topological groups can be framed in terms of this functor. For example, it is known that it preserves direct products, that is, if $(G_i)_{i\in I}$ is a family of topological groups, then $\ku E_L\big(\prod_{i\in I}G_i\big)=\prod_{i\in I}\ku E_L(G_i)$ \cite[Section 3.5]{MR4327092}. Our paper deals with the specific question of whether $\ku E_L$ also preserves quotients and the various ramifications of this.

Although the motivating question is most simply stated for quotient groups, a substantial part of the paper is formulated for homogeneous spaces. Thus, if $H$ is a closed subgroup of $G$, not necessarily normal, the coset space $H\backslash G$ carries two natural coarse structures. One is the quotient coarse structure $\ku E_q(H\backslash G)$ obtained from $\ku E_L(G)$, while the other is the Hausdorff-distance coarse structure $\ku E_{\rm Haus}(H\backslash G)$ obtained by first pushing each continuous left-invariant pseudometric on $G$ to the corresponding Hausdorff pseudometric on $H\backslash G$ and then intersecting. When $H$ is normal, this second structure is exactly the usual left coarse structure on the quotient group $G/H$. In this sense the quotient-group problem below is the normal-subgroup instance of a more general question about whether the quotient and Hausdorff coarse structures on a homogeneous space agree.

\begin{question}\label{quest:intro1}
Let $H$ be a closed normal subgroup of a topological group $G$. Is the left coarse structure on the quotient $G/H$ equal to the quotient of the left coarse structure on $G$?
\end{question}
Whereas the analogous question for the left uniform structures has a positive answer, in coarse geometry the situation is far more subtle. Indeed, the answer to Question \ref{quest:intro1} is no in general and a counterexample arising among mapping class groups of infinite-type surfaces will be provided shortly. Remarkably, this single example simultaneously yields negative answers to several natural questions concerning quotient coarse structures, lifting of bounded sets, and preservation of metrisability. Still, the negative examples are counterbalanced by several positive results, whose common theme is that equality of coarse structures is governed by the possibility of lifting bounded sets in a controlled way.
\begin{question}\label{quest:intro2}
Let $H$ be a closed normal subgroup of a topological group $G$. Does every bounded subset of $G/H$ lift to a bounded subset of $G$?
\end{question}
By Corollary \ref{cor:lifting crit}, Questions \ref{quest:intro1} and \ref{quest:intro2} are equivalent for any specific pair of groups $G$ and $H$, but once we ask for a function lifting bounded sets to bounded sets, a so-called modest section for the quotient map, things become more complicated.

As is familiar from uniform spaces, metrisable coarse spaces are much easier to comprehend than general ones. In the setting of uniform spaces, the difference is mostly a matter of simple inconvenience as every uniformity is the union of a family of metrisable uniformities. However, for coarse structures the difference is more profound as not every coarse structure is the  intersection of metrisable coarse structures. Fortunately, for a large class of topological groups $G$, in particular, Polish groups, metrisability of $\ku E_L(G)$ is simply equivalent to $G$ being {\em locally bounded}, i.e., having a bounded identity neighbourhood \cite[Theorem 2.38]{MR4327092}. Let us also remark that, if $\ku E_L(G)$ is metrisable, Questions \ref{quest:intro1} and \ref{quest:intro2} have positive answers. This makes the following extension problem especially natural: can metrisability of the two end terms in a short exact sequence force metrisability in the middle, and thereby force the quotient map to behave well from the coarse point of view?
Metrisability of the left coarse structure has been widely studied specifically, for (pure) mapping class groups of infinite-type surfaces and infinite locally-finite graphs, for example \cite{MR4634747}, \cite{MR4892635} and \cite{MR4530621}. On the other hand, little is known about how these equivalent properties behave under group extensions or in short exact sequences.
\begin{question}\label{quest:intro3}
Let $G$ be a Polish group and $H$ a closed normal subgroup such that both $\ku E_L(H)$ and $\ku E_L(G/H)$ are metrisable. Does it follow that also $\ku E_L(G)$ is metrisable?
\end{question}

As noted above, there is a single quotient group providing a negative answer to each of Questions \ref{quest:intro1}, \ref{quest:intro2} and \ref{quest:intro3}. To see this, consider the \textit{generalized Birman exact sequence} applied to the mapping class group of the Loch Ness monster surface LNM,
\begin{equation}\label{BirmanLNM}
1\longrightarrow\pi_{1}\big(\mathrm{F}_{n}(\mathrm{LNM})\big)
{\longrightarrow} 
\mathrm{PMap}(\mathrm{LNM}\!\setminus\! \{n \mbox{ pts}\}) 
\maps{ } \mathrm{PMap}(\mathrm{LNM}) \longrightarrow 1,
\end{equation}
where $n\geqslant 1$ and 
$$
\mathrm{F}_{n}(\mathrm{LNM})=\Con{(x_{1},\ldots,x_n)\in \mathrm{LNM}^{n}}{x_{i}\neq x_{j}\mbox{ if }i\neq j}
$$ 
is the $n$-\textit{configuration space} of the Loch Ness monster surface. Equipping the mapping class groups above with their canonical Polish group topologies, we see that the quotient map 
$$
\mathrm{PMap}(\mathrm{LNM}\setminus \{n \mbox{ pts}\}) {\longrightarrow} \mathrm{PMap}(\mathrm{LNM})
$$ 
is a continuous epimorphism and that the kernel $H=\pi_{1}\big(\mathrm{F}_{n}(\mathrm{LNM})\big)$ is a closed normal  subgroup of  $G=\mathrm{PMap}(\mathrm{LNM}\setminus \{n \mbox{ pts}\})$. Because the $n$-configuration space of a manifold is itself a manifold, the group $\pi_{1}\big(\mathrm{F}_{n}(\mathrm{LNM})\big)$ is countable discrete and has metrisable left coarse structure. Similarly, the left coarse structure of the quotient group $\mathrm{PMap}(\mathrm{LNM})$ can be metrised with a bounded metric. On the other hand, the left coarse structure of the middle group $\mathrm{PMap}(\mathrm{LNM}\setminus\{n\mbox{ pts}\})$ is non-metrisable by \cite[Theorem 1.1]{MR4892635} and \cite[Theorem 1.4]{MR4634747}. In other words, the short exact sequence above is a counterexample to Question \ref{quest:intro3}. To see that it is also a counterexample to Questions \ref{quest:intro1} and \ref{quest:intro2}, we rely on our first main result.

\begin{theorem}\label{thm:intro1}
Suppose $H$ is a closed normal subgroup of a Polish group $G$ and assume that the quotient group $G/H$ has metrisable coarse structure. Then the following are equivalent.
\begin{enumerate}
\item The left coarse structure on  $G/H$ equals the quotient of the left coarse structure on $G$,
\item every $\mathcal E_L(G/H)$-bounded set $B$ is of the form $\pi[A]$ for some bounded $A\subseteq G$,
\item $H$ is  {\em $\sigma$-cobounded} in $G$, that is, $G=\bigcup_{n=1}^\infty HA_n$ for a sequence of bounded subsets $A_n\subseteq G$, 
\item there is a modest section $G/H\maps\phi G$ for the quotient map $\pi$.
\end{enumerate}
\end{theorem}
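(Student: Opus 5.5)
We prove the cycle (1)$\Leftrightarrow$(2), (2)$\Rightarrow$(3)$\Rightarrow$(4)$\Rightarrow$(2). The equivalence of (1) and (2) is the lifting criterion of Corollary \ref{cor:lifting crit}, and we invoke it directly. It can also be seen by hand. The quotient coarse structure $\ku E_q$ is always contained in $\ku E_L(G/H)$, since $\pi$ is a continuous epimorphism. For the reverse inclusion, $\ku E_L(G/H)$ is generated by the sets $\{(x,y)\mid x^{-1}y\in B\}$ with $B$ bounded. If $B=\pi[A]$ with $A$ bounded, such a set is the image of the $\ku E_L(G)$-entourage $\{(g,f)\mid g^{-1}f\in A\}$. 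Conversely, if the two structures agree, then $\{H\}\times B$ is a quotient entourage, so it is contained in the image of a single entourage $E$ of $G$. Then $A=E[H]\cap\pi^{-1}[B]$ is contained in $E[H]$, and we use left invariance together with the normality of $H$ to translate this into a bounded lift.

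\textbf{(2)$\Rightarrow$(3).} Since $\ku E_L(G/H)$ is metrisable, $G/H$ is covered by countably many bounded sets $B_n$, for instance balls of a compatible coarse metric. Lifting each $B_n=\pi[A_n]$ with $A_n$ bounded gives $G=\bigcup_n \pi^{-1}[B_n]=\bigcup_n HA_n$, which is $\sigma$-coboundedness.

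\textbf{(4)$\Rightarrow$(2).} A modest section $\phi$ sends bounded sets to bounded sets, and $\pi\circ\phi=\mathrm{id}$. Hence $B=\pi[\phi[B]]$ with $\phi[B]$ bounded.

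\textbf{(3)$\Rightarrow$(4).} This is the main step and the expected obstacle. Fix a proper coarse metric $d$ on $G/H$ and balls $B_k=\{x\mid d(H,x)\leqslant k\}$; we may assume $A_n$ is increasing. The task is to define $\phi$ on the shell $B_k\setminus B_{k-1}$ so that its values lie in a single bounded set $C_k$. Then any bounded $B\subseteq G/H$ lies in some $B_k$, and $\phi[B]\subseteq C_1\cup\dots\cup C_k$ is bounded.

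The difficulty is that a ball $B_k$, although bounded in $G/H$, need not be covered by a single $\pi[A_n]$. We therefore use the Baire category theorem, and this is where Polishness enters. Each $HA_n$ may be replaced by $H\overline{A_n}$ or by an analytic enlargement, and since the left coarse structure is generated by the coarse structures of continuous left-invariant pseudometrics, closures of bounded sets remain bounded. Then $G/H=\bigcup_n\pi[A_n]$ is a countable union of analytic sets, so some $\pi[A_n]$ is comeagre in a nonempty open set $\pi[V]$. A Pettis-type argument, $\pi[A_n]^{-1}\pi[A_n]\supseteq$ an identity neighbourhood $U$ of $G/H$, then shows that $U\subseteq\pi[A_n^{-1}A_n]$, a bounded lift of an identity neighbourhood.

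Because $\ku E_L(G/H)$ is metrisable, and for Polish groups this is equivalent to local boundedness, each ball $B_k$ is contained in $(FU)^m$ for some finite $F\subseteq G/H$ and some $m$; this uses coarse connectedness of the word structure generated by a bounded open set. Lifting $F$ to a finite set $\tilde F\subseteq G$ gives $B_k\subseteq\pi[(\tilde F A_n^{-1}A_n)^m]$, and the set $(\tilde F A_n^{-1}A_n)^m$ is bounded. We now choose $\phi(x)$ in this lift for $x\in B_k\setminus B_{k-1}$; no measurability is required. This establishes (4).

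Since the Baire/Pettis step already yields bounded lifts of all balls $B_k$, it also gives (3)$\Rightarrow$(2) directly, which completes the cycle.
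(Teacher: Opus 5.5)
Your proof is correct and uses the same ingredients as the paper. These are: Baire category plus Pettis' lemma to obtain a bounded lift of a set with nonempty interior, the fact that every bounded subset of the Polish group $G/H$ lies in some $(FU)^m$, and a shell-by-shell choice of representatives along an exhausting sequence of bounded sets.

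The differences are only organisational. You run the cycle as (2)$\Rightarrow$(3)$\Rightarrow$(4)$\Rightarrow$(2), folding the Baire step into (3)$\Rightarrow$(4) and applying it in $G/H$ to the analytic sets $\pi[\overline{A_n}]$. The paper instead proves (3)$\Rightarrow$(2) by applying Pettis in $G$ to $H\overline{A_n}$, and then proves (2)$\Rightarrow$(4)$\Rightarrow$(3).
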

Here, a section $G/H\maps\phi G$ for the quotient map $G\maps\pi G/H$ is said to be {\em modest} if $\phi[B]$ is bounded in $G$ whenever $B$ is $\ku E_L(G/H)$-bounded. Thus Theorem \ref{thm:intro1} says that, under the mild metrisability hypothesis on the quotient, all the natural ways in which a quotient map might preserve large-scale information are equivalent: equality of coarse structures, lifting of individual bounded sets, a countable covering of $G$ by bounded pieces modulo $H$, and the existence of one global bounded-set-preserving choice of representatives. In the Birman example above, $H$ and $G/H$ both have metrisable left coarse structure, while $G$ does not. By \cite[Corollary 4.16]{MR4327092} this precludes the existence of a modest section for the quotient map and hence we have the counterexample to Questions \ref{quest:intro1} and \ref{quest:intro2}.

The Birman sequence is the main geometric counterexample, but the paper also records a simpler linear one. In the final section we reinterpret a standard example of Bonet and Dierolf \cite[Example~1]{BonetDierolf1993} as a counterexample among abelian Polish groups: a continuous epimorphism of additive Polish groups has a bounded set in the quotient that cannot be lifted to a bounded set upstairs. This Bonet--Dierolf example isolates the lifting obstruction in a classical Fr\'echet-space setting, whereas the Birman example shows that the same obstruction can occur in mapping class groups and, moreover, can be tied to the failure of metrisability in the middle term of a short exact sequence.

The homogeneous-space viewpoint is also useful away from normal subgroups. In particular, for the diagonal subgroup $\Delta\leqslant G\times G$, the homogeneous space $\Delta\backslash(G\times G)$ is identified with $G$ by $\Theta(\Delta(g,f))=g\inv f$, and the quotient coarse structure on $\Delta\backslash(G\times G)$ corresponds precisely to the Roelcke coarse structure $\ku E_\wedge(G)$. The corresponding Hausdorff coarse structure is identified with $\bigcap_d\ku E_{d_\wedge}(G)$, so the same quotient-versus-Hausdorff comparison explains a natural question about the Roelcke coarse structure. For compact connected orientable surfaces this becomes quite concrete. On ${\sf Homeo}_0(S)$ we define a metric by comparing graphs of lifts in the universal cover and allowing admissible graph identifications; the resulting metric generates the Roelcke coarse structure. Thus two surface homeomorphisms are close in the Roelcke coarse geometry exactly when, after passing to suitable lifts, their graphs can be matched in the universal cover by uniformly bounded displacement.

We now turn from counterexamples and interpretations to positive criteria. These results should be read as complementary mechanisms for producing bounded lifts. Theorem \ref{thm:intro1} is a recognition theorem: in the locally bounded quotient case, any one of several bounded-lifting formulations implies all the others. The next result gives a concrete way to obtain such a lifting, namely by finding a transversal whose failure to be multiplicative is bounded inside the kernel.

\begin{theorem}\label{thm:intro2}
Let $H$ be a closed normal subgroup of a Polish group $G$. Suppose there is an  analytic transversal $T\subseteq G$ of the quotient map $G\maps \pi G/H$ for which the intersection $(T^{-2}T)\cap H$ is bounded in $G$. Then the section $G/H\maps \phi G$ for $\pi$ defined by $T$ is Borel measurable and modest with respect to the left coarse structures on $G/H$ and $G$. It follows that 
$$
\mathcal E_q(G/H)=\mathcal E_L(G/H)=\mathcal E_{\rm Haus}(G/H).
$$
\end{theorem}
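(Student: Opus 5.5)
Let $K=(T^{-2}T)\cap H$ be bounded, and let $\phi(gH)$ be the unique element of $T\cap gH$. The plan has three parts: Borel measurability of $\phi$, a multiplicative "cocycle" identity for $\phi$, and the modesty estimate obtained by iterating that identity.

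\emph{Borel measurability.} The graph of $\phi$ is $\{(\pi(t),t): t\in T\}$, the image of the analytic set $T$ under the continuous map $t\mapsto(\pi(t),t)$, so it is analytic. A function between Polish spaces with analytic graph is Borel: the preimage $\phi^{-1}[U]=\pi[T\cap U]$ of an open set is analytic, and its complement $\pi[T\setminus U]$ is analytic too. Souslin's theorem then makes $\phi^{-1}[U]$ Borel. Here $G/H$ is Polish since $H$ is closed.

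\emph{Cocycle identity.} Fix $x,y\in G/H$ and write $s=\phi(x)$, $t=\phi(y)$, $u=\phi(xy)$. Then $u^{-1}st\in H$. I want to write this as an element of $T^{-2}T$, but the product has the shape $T^{-1}TT$, so I first rearrange. Set $v=\phi(x^{-1})$, so that $v^{-1}s^{-1}\in H$ and $s^{-1}\in vH$. Normality of $H$ gives $u^{-1}v^{-1}s^{-1}\in H$ as well, and we want elements of the form $a^{-1}b^{-1}c$ with $a,b,c\in T$. The relation $\pi(s)\pi(t)=\pi(u)$ gives $\pi(t)=\pi(s^{-1}u)$. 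Hence $t^{-1}s^{-1}u\in H$ and lies in $T^{-1}T^{-1}T=T^{-2}T$, so
$$\phi(x)^{-1}\phi(xy)\in \phi(y)K.$$
Equivalently $\phi(xy)\in\phi(x)\phi(y)K$.

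\emph{Modesty.} The case $x=y=1$ gives $\phi(1)\in\phi(1)^2K$, so $\phi(1)^{-1}\in K$ is bounded. Now let $B\subseteq G/H$ be $\mathcal E_L(G/H)$-bounded. I would use the characterisation of bounded sets in Polish groups (as in \cite{MR4327092}): for an identity neighbourhood $V$ in $G$, $\pi[V]$ is an identity neighbourhood in $G/H$, and boundedness of $B$ should give a finite $F$ and $k$ with $B\subseteq (F\pi[V])^k$. Under the metrisability reading, $G/H$ is then coarsely generated by $\pi[V]\cup\pi[F]$ up to the relevant scale. The difficulty is that $\phi[\pi[V]]$ need not be bounded, since $V$ lifts only modulo $H$.

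\emph{Main obstacle.} This is where the Borel measurability has to be used. By Pettis/Baire, $\phi^{-1}[W]$ is non-meagre for some bounded open $W$, after covering $G$ by countably many left translates of a bounded identity neighbourhood. This requires a locally bounded setting. The hypothesis does not assume local boundedness, so instead I would run the argument with an arbitrary continuous left-invariant pseudometric $d$. Choose $W=\{g: d(g,1)<1\}$; $G$ is covered by countably many translates $g_nW$. Hence some $\phi^{-1}[g_nW]$ is non-meagre Borel, and by Pettis $\phi^{-1}[g_nW]^{-1}\phi^{-1}[g_nW]$ contains a neighbourhood $U$ of $1$ in $G/H$. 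The cocycle identity then puts $\phi[U]$ inside $W^{-1}g_n^{-1}g_nW\,\phi(\cdot)$ up to $K$ factors, i.e.\ inside a $d$-bounded set times $K^{\pm2}$. Iterating the identity over products of at most $k$ elements of $U\cup E$ ($E$ finite), $\phi[B]$ lies in a product of finitely many bounded sets and is therefore bounded.

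The statement's equalities then follow. Modesty gives liftings of bounded sets, so by Corollary \ref{cor:lifting crit} $\mathcal E_q(G/H)=\mathcal E_L(G/H)$. The normal case identification $\mathcal E_{\rm Haus}(G/H)=\mathcal E_L(G/H)$ completes the chain. The delicate point is making the Pettis step quantitative, and verifying that bounded subsets of $G/H$ are covered by finitely many products of the neighbourhood $U$ obtained this way. I expect to need that every identity neighbourhood $U$ in $G/H$ satisfies $B\subseteq (EU)^k$ for finite $E$, which is the standard characterisation of $\mathcal E_L$-bounded sets.
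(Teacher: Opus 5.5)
Your proposal is correct and follows essentially the paper's proof. The shared steps are: Borel measurability from the analytic graph; the defect identity $\phi(xy)\in\phi(x)\phi(y)K$; Baire category and Pettis applied to the cover $G/H=\bigcup_n\phi^{-1}[g_nW]$; and iterating the identity along $B\subseteq(EU)^k$. The only difference is cosmetic: you test against the unit ball $W$ of an arbitrary continuous left-invariant pseudometric $d$, use $\phi[U]\subseteq W^{-1}WK^{-1}$, and conclude that $\phi[B]$ is $d$-bounded for every $d$, whereas the paper tests against an arbitrary identity neighbourhood $V$ and uses the $(FV)^m$ characterisation in $G$.

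Do delete the aside asserting $u^{-1}v^{-1}s^{-1}\in H$. It is false in general, since its image in $G/H$ is $(xy)^{-1}$, but nothing later depends on it.
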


Theorem \ref{thm:intro2} is most useful when one can choose representatives with bounded defect. Our final criterion goes in a different direction. Instead of constructing a section, it controls the geometry along the fibres of the quotient map. If the coarse structure induced on $H$ from $G$ is already captured by a single continuous left-invariant pseudometric on $G$, then boundedness in the quotient and boundedness along the fibres combine to produce bounded lifts.

\begin{theorem}\label{thm:intro3}
Let $H$ be a closed normal subgroup of a topological group $G$. Assume that there is a  continuous left-invariant pseudometric $\partial$ on $G$ such that $\partial\upharpoonright H$ is a compatible metric for the restriction of $\ku E_L(G)$ to $H$.
Then the left coarse structure on  $G/H$ equals the quotient of the left coarse structure on $G$.
\end{theorem}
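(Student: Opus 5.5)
We reduce Theorem \ref{thm:intro3} to the lifting criterion of Corollary \ref{cor:lifting crit}, which shows that Questions \ref{quest:intro1} and \ref{quest:intro2} are equivalent for a fixed pair $(G,H)$. So it suffices to show that every $\ku E_L(G/H)$-bounded set $B\subseteq G/H$ is $\pi[A]$ for some $\ku E_L(G)$-bounded $A\subseteq G$. The inclusion $\ku E_q(G/H)\subseteq \ku E_L(G/H)$ is automatic, since $\pi$ is coarsely Lipschitz from $\ku E_L(G)$ to $\ku E_L(G/H)$. Fix such a $B$ and a point $x\in\pi\inv[B]$. The natural candidate is
$$
A=\Con{g\in\pi\inv[B]}{\partial(x,g)\leqslant R+1},\qquad R=\sup_{b\in B}\widehat\partial(\pi(x),b),
$$
where $\widehat\partial(gH,fH)=\inf_{h\in H}\partial(g,fh)$ is the quotient pseudometric. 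Because $H$ is normal and $\partial$ is left-invariant and continuous, $\widehat\partial$ is a continuous left-invariant pseudometric on $G/H$. Hence $R<\infty$ by boundedness of $B$, and the infimum gives $\pi[A]=B$.

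It remains to show that $A$ is bounded in $G$, i.e.\ of finite $d$-diameter for every continuous left-invariant pseudometric $d$ on $G$. Replacing $d$ by $\max(d,\partial)$, we may assume $d\geqslant\partial$. Since $\widehat d$ is continuous left-invariant on $G/H$, $B$ has finite $\widehat d$-diameter $S$. Take $g\in A$. We can then choose $h\in H$ with $d(x,gh)\leqslant S+1$, so in particular $\partial(x,gh)\leqslant S+1$. Together with $\partial(x,g)\leqslant R+1$, left-invariance gives $\partial(1,h)=\partial(g,gh)\leqslant R+S+2$. So $h$ lies in a $\partial\upharpoonright H$-bounded subset of $H$, namely a ball of fixed radius $R+S+2$ independent of $g$.

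Here the hypothesis enters. Since $\partial\upharpoonright H$ is compatible with $\ku E_L(G)\upharpoonright H$, its bounded sets are exactly the $\ku E_L(G)$-bounded subsets of $H$. So $D=\con{h\in H}{\partial(1,h)\leqslant R+S+2}$ has finite $d$-diameter, say $d(1,h)\leqslant M$ for all $h\in D$. Then
$$
d(x,g)\leqslant d(x,gh)+d(gh,g)=d(x,gh)+d(h,1)\leqslant S+1+M,
$$
so $A$ lies in a $d$-ball. As $d$ was arbitrary, $A$ is bounded, and Corollary \ref{cor:lifting crit} yields $\ku E_L(G/H)=\ku E_q(G/H)$.

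The main obstacle is this last step. We must correctly read ``compatible metric for the restriction'' as saying that $\partial$-balls in $H$ are $\ku E_L(G)$-bounded, which is the direction used. We must also ensure a single correction element $h$ serves for both pseudometrics. That is why we pass to $\max(d,\partial)$ before choosing $h$.
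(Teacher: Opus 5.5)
Your proof is correct and is essentially the paper's argument. The paper also reduces to Corollary~\ref{cor:lifting crit}, lifts $B$ to the preimages lying in a bounded $\partial$-neighbourhood, and proves boundedness by passing to some $d\geqslant\partial$ and using that $\partial$-bounded subsets of $H$ are $\mathcal E_L(G)$-bounded (Proposition~\ref{prop:FirstMainCriterion}). The only differences are minor: the paper argues by contradiction with a sequence and anchors at the coset $H$ itself (choosing $h_n\in H$ with $d(h_n,a_n)$ small), which makes the lifting statement work in $H\backslash G$ for non-normal closed $H$, whereas your fibre translate $g\mapsto gh$ with $\partial(g,gh)=\partial(1,h)$ uses normality, as the statement allows.
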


This last criterion is first proved in the broader homogeneous-space setting as a bounded-lifting statement for $H\backslash G$, and then specialised to normal subgroups to obtain equality on $G/H$. We also give a product version for countable products of locally compact, $\sigma$-compact groups, where metrisability of the coarse structure induced on $H$ suffices to apply the same strategy.


\subsection*{Declaration on the use of AI-assisted tools}

AI-assisted tools, specifically OpenAI's ChatGPT/Codex tools, were used during the preparation of this manu\-script for auxiliary purposes. Namely, by identifying potentially relevant results from the literature, improving the clarity and organisation of the exposition, and proofreading the text. The tools were not used to generate mathematical results or as a substitute for verification. The authors take full responsibility for the content of the paper.

\subsection*{Acknowledgment of support}
The authors recognise the support of the US National Science Foundation via grant DMS-2246986. The first author also acknowledges financial support from the National Autonomous University of Mexico (UNAM), through the \say{Programa de Apoyo para la Movilidad en Posgrado} and PAPIIT Grant IN100326; from the Secretar\'ia de Ciencia, Humanidades, Tecnolog\'ia e Innovaci\'on (SECIHTI), through the \say{Convocatoria de apoyos complementarios de movilidad en el extranjero, movilidad nacional, movilidad en los sectores de inter\'es y movilidad para programas de doble titulaci\'on 2025} and Scholarship 1027866; and from Agros Horticultores S.A de C.V.


\section{Background on coarse spaces and their quotients}
Let us recall the definition of coarse structures and spaces due to J. Roe \cite{MR2007488}.
\begin{definition}
    A {\em coarse structure} on a set $X$ is an ideal $\mathcal E$ of subsets $E\subseteq X\times X$ called {\em entourages} such that
    \maths{
    \Delta&=\Con{(x,x)}{x\in X}\in \mathcal E,\\
    E^{\sf T}&=\Con{(y,x)}{(x,y)\in E}\in \mathcal E,\\
    E\circ F&=\Con{(x,z)}{(x,y)\in E\;\&\; (y,z)\in F \;\text{ for some }y\in X}\in \mathcal E
    }
    whenever $E,F\in \mathcal E$. A {\em coarse space} is a set $X$ equipped with a coarse structure $\mathcal E$.
\end{definition}

The notion of coarse structure is somewhat dual to uniform structures, which are filters as opposed to ideals. Thus, the appropriate maps between coarse spaces are similarly defined dually to uniformly continuous maps. Namely, a map $X\maps \phi Y$ between two coarse spaces $(X,\mathcal E)$ and $(Y,\mathcal F)$ is said to be {\em controlled} if
\maths{
\forall E\in \mathcal E\;\; \exists F\in \mathcal F\;\; \Big((x,x')\in E\Rightarrow (\phi(x),\phi(x'))\in F\Big).
}
Put differently, $\phi$ is controlled if $(\phi\times \phi)[E]\in \mathcal F$ for all entourages $E\in \mathcal E$.

\begin{example}
If $d$ is a pseudometric on $X$, we let 
$$
\mathcal E_d(X)=\COn{E\subseteq X\times X}{\sup_{(x,y)\in E}d(x,y)<\infty}
$$
be the coarse structure induced by $d$. Similarly, 
$$
\mathcal U_d(X)=\COn{E\subseteq X\times X}{\inf_{(x,y)\notin E}d(x,y)>0}
$$
is the uniform structure induced by $d$.
\end{example}

As $\mathcal P(X\times X)$ is a coarse structure and an arbitrary intersection of coarse structures is a coarse structure itself, it is immediate that any collection $\mathcal C$ of subsets of $X\times X$ generates a unique coarse structure, namely the smallest coarse structure containing $\mathcal C$.

\begin{definition}
Suppose $X$ is a coarse space and $\sim$ is an equivalence relation on $X$. The {\em quotient coarse structure} on $X/\!\sim$ is the smallest coarse structure for which the quotient map 
$$
X\overset{\pi}{\longrightarrow}X/\!\sim
$$
is controlled. Similarly, if $X$ is a uniform space with an equivalence relation $\sim$, the {\em quotient uniformity} on $X/\!\sim$ is the largest uniformity that renders $\pi$ uniformly continuous.
\end{definition}
In other words, the quotient coarse structure on $X/\!\sim$ is that generated by  the collection of all images $(\pi\times \pi)[E]$ of coarse entourages $E$ on $X\times X$.
If $\mathcal E$ denotes the coarse structure on $X$, we write $\mathcal E_q(X/\!\sim\!)$  to denote the quotient coarse structure on $X/\!\sim$ and $\mathcal U_q(X/\!\sim\!)$ to denote the quotient uniformity.

To see that this is the categorically correct definition for coarse structures, just note that, if $X\overset{\phi}{\longrightarrow} Y$ is any controlled map into another coarse space $Y$ so that $\phi(x)=\phi(x')$ whenever $x\sim x'$, then $\phi$ factors through $\pi$, that is, there is a unique controlled map $\big(X/\!\sim\!\big)\overset{\psi}{\longrightarrow} Y$ so that $\phi=\psi\circ \pi$. Indeed, the existence and uniqueness of the map $\psi$ is evident and to see that $\psi$ is controlled it suffices to see that, for any generator $F=(\pi\times \pi)[E]$ for the coarse structure on $X/\!\sim$, the image
$$
(\psi\times\psi)[F]= (\psi\times\psi)\big[(\pi\times\pi)[E]\big]=(\psi\pi\times\psi\pi)[E]=(\phi\times\phi)[E]
$$
is a coarse entourage on $Y$.

Just as a uniform structure on a set $X$ induces a topology on $X$, a coarse structure $\mathcal E$ on $X$ gives rise to the class of bounded subsets of $X$. Namely, $A\subseteq X$ is {\em bounded} if $A\times A\in \mathcal E$. Assuming furthermore that $(X,\mathcal E)$ is {\em connected}, meaning that $\bigcup\mathcal E=X\times X$ or, equivalently that $\{(x,y)\}\in \mathcal E$ for all $x,y\in X$, then the class of bounded sets is closed under finite unions. It follows that, if $(X,\mathcal E)$ is a connected coarse space, then the class $\mathcal B$ of bounded sets is a {\em bornology} on $X$, that is, an ideal of subsets such that $\bigcup \mathcal B=X$.

\begin{definition}
    A coarse structure $\mathcal E$ on a group $G$ is said to be {\em left-invariant}  if
    $$
    \widehat E=\Con{(gf,gh)}{(f,h)\in E\;\&\; g\in G }\in \mathcal E
    $$
    for all $E\in \mathcal E$. A bornology $\mathcal B$ on $G$ is said to be a {\em group bornology} if 
    $$
    A\inv\in \mathcal B\quad\&\quad  AB\in \mathcal B
    $$
    whenever $A,B\in \mathcal B$.\footnote{In \cite{MR2948279} left-invariant coarse structures are called {\em compatible} and group bornologies are called {\em generating families}.}
\end{definition}
As established in \cite{MR2948279}, for every group $G$, there is a bijective correspondence between left-invariant connected coarse structures and group bornologies on $G$. To define this, we set
$$
L_A=\Con{(g,f)}{g\inv f\in A}
$$
for any subset $A\subseteq G$.
The correspondence is then given by
\maths{
\mathcal E&\;\mapsto\; \mathcal B=\Con{A\subseteq G}{A\times A\in \mathcal E}\\
\mathcal B&\;\mapsto\; \mathcal E=\COn{E\subseteq G\times G}{E\subseteq L_A \text{ for some }A\in \mathcal B}.
}

\begin{lemma}\label{ConcreteBasisQuotientCoarseStructure}
Let $G$ be a group equipped with a left-invariant connected coarse structure $\mathcal E$ and let $H$ be a subgroup of $G$. Then a set $E\subseteq H\backslash G\times H\backslash G$ is an entourage for the quotient coarse structure on  $H\backslash G$ if and only if 
    $$
    E\;\subseteq \;L_A^{H\backslash G}\colon=\COn{(Hg,Hf)}{g\inv Hf\cap A\neq \emptyset}
    $$
for some  bounded subset $A\subseteq G$.
\end{lemma}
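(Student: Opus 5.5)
We will prove both directions by showing that the family $\ku F=\{E \mid E\subseteq L^{H\backslash G}_A \text{ for some bounded } A\subseteq G\}$ is exactly the quotient coarse structure. We use the description of the quotient coarse structure recorded after its definition: it is the coarse structure generated by the images $(\pi\times\pi)[E]$ with $E\in\mathcal E$. By the correspondence between left-invariant connected coarse structures and group bornologies, each such $E$ satisfies $E\subseteq L_A$ for some bounded $A$. The proof then has two steps: first show that $\ku F$ is contained in the quotient structure, then show that $\ku F$ is itself a coarse structure containing every generator. Minimality of the generated structure then gives equality.

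For the first step, fix a bounded $A$ and $(Hg,Hf)\in L^{H\backslash G}_A$. Then there are $h\in H$ and $a\in A$ with $g\inv hf=a$. Put $f'=hf$, so that $Hf'=Hf$ and $g\inv f'=a\in A$. Hence $(g,f')\in L_A$ and $(Hg,Hf)=(\pi\times\pi)(g,f')$. This shows $L^{H\backslash G}_A=(\pi\times\pi)[L_A]$. Since $L_A\in\mathcal E$, the set $L^{H\backslash G}_A$ lies in the quotient structure, and so do all of its subsets. The same computation applied to a generator $(\pi\times\pi)[E]$ with $E\subseteq L_A$ shows $(\pi\times\pi)[E]\subseteq L^{H\backslash G}_A$, so every generator lies in $\ku F$.

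For the second step we check the axioms of a coarse structure for $\ku F$. It is clearly closed under subsets. It is closed under finite unions because $L^{H\backslash G}_A\cup L^{H\backslash G}_B\subseteq L^{H\backslash G}_{A\cup B}$ and bounded sets form a bornology; this is where connectedness of $\mathcal E$ is used. The diagonal lies in $\ku F$ because it is contained in $L^{H\backslash G}_{\{1\}}$, and $\{1\}$ is bounded. For transposes, $(Hg,Hf)\in L^{H\backslash G}_A$ means $g\inv hf=a$, so $f\inv h\inv g=a\inv$ and therefore $(Hf,Hg)\in L^{H\backslash G}_{A\inv}$.

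The only step needing a little care is composition. Take $(Hg,Hf)\in L^{H\backslash G}_A$ and $(Hf,Hk)\in L^{H\backslash G}_B$, witnessed by $g\inv h_1 f=a$ and $f\inv h_2 k=b$. Multiplying gives $g\inv h_1 h_2 k=ab\in AB$, so $(Hg,Hk)\in L^{H\backslash G}_{AB}$. Here the choice of representative $f$ cancels because both witnesses can be written with the same $f$. Since $\mathcal B$ is a group bornology, $A\inv$ and $AB$ are bounded, and so $\ku F$ is a coarse structure. Being a coarse structure that contains all generators and is contained in the quotient structure, $\ku F$ equals the quotient coarse structure.
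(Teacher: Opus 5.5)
Your proof is correct and follows the paper's argument. Both identify $L^{H\backslash G}_A=(\pi\times\pi)[L_A]$ and then observe that the sets $L^{H\backslash G}_A$, for $A$ bounded, are closed up to inclusion under transposes, finite unions and composition, so their downward closure is already the generated quotient coarse structure; your explicit check of the diagonal via $L^{H\backslash G}_{\{1\}}$ is a detail the paper leaves implicit.
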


\begin{proof}
Because $\mathcal E$ is connected and left-invariant, it is generated by the entourages $L_A$ for $A$ bounded in $G$. It thus follows that the quotient coarse structure $\mathcal E_q(H\backslash G)$ is generated by their images
$(\pi\times \pi)[L_A]$ where $G\maps \pi H\backslash G$ denotes the quotient map, $\pi(g)=Hg$. Observe now that
\maths{
(Hg,Hf)\in (\pi\times \pi)[L_A ]
& \;\Leftrightarrow\; \exists h_1,h_2\in H\,\colon\; (h_1g,h_2f)\in L_A\\
& \;\Leftrightarrow\;  \exists h_1,h_2\in H\,\colon\;  g\inv h_1\inv h_2f\in A\\
& \;\Leftrightarrow\; g\inv Hf\cap A\neq \emptyset\\
& \;\Leftrightarrow\; (Hg,Hf)\in L^{H\backslash G}_A,
}
i.e., $L^{H\backslash G}_A=(\pi\times \pi)[L_A ]$. Also, it is immediate that
$$
\big(L^{H\backslash G}_A\big)^{\sf T}=L^{H\backslash G}_{A\inv}
\qquad\&\qquad
L^{H\backslash G}_A \cup L^{H\backslash G}_B=L^{H\backslash G}_{A\cup B} 
\qquad\&\qquad 
L^{H\backslash G}_A\circ L^{H\backslash G}_B=L^{H\backslash G}_{AB}.
$$
Therefore, the coarse structure generated by the sets $L^{H\backslash G}_A=(\pi\times \pi)[L_A ]$ is  exactly the collection of sets $E\subseteq H\backslash G\times H\backslash G$ contained in one of them.
\end{proof}

\begin{lemma}\label{BoundedSetsQuotient}
Let $G$ be a group equipped with a left-invariant connected coarse structure $\mathcal E$, let $H$ be a subgroup of $G$ and $G\maps\pi H\backslash G$ be the quotient map. Then a set $B\subseteq H\backslash G$ is bounded in the quotient coarse structure if and only if 
 $$
 B=\pi[A]
 $$
for some  bounded subset $A\subseteq G$.
\end{lemma}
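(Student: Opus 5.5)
The plan is to read boundedness in the quotient directly off Lemma \ref{ConcreteBasisQuotientCoarseStructure}. By definition, $B\subseteq H\backslash G$ is bounded exactly when $B\times B\in\mathcal E_q(H\backslash G)$. By that lemma, this holds exactly when $B\times B\subseteq L^{H\backslash G}_A$ for some bounded $A\subseteq G$. I will prove the two implications separately.

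For the direction ($\Leftarrow$), suppose $B=\pi[A]$ with $A$ bounded. Take $Hg,Hf\in B$ and choose $g,f\in A$ as representatives. Then $g\inv f\in g\inv Hf\cap A\inv A$, since $1\in H$. Hence $(Hg,Hf)\in L^{H\backslash G}_{A\inv A}$. The set $A\inv A$ is bounded because the bounded sets of a left-invariant connected coarse structure form a group bornology. So $B\times B\in \mathcal E_q(H\backslash G)$.

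For the direction ($\Rightarrow$), suppose $B$ is bounded, so $B\times B\subseteq L^{H\backslash G}_A$ with $A$ bounded. If $B=\emptyset$, take the empty set upstairs. Otherwise fix $Hg_0\in B$. For each $Hf\in B$, the pair $(Hg_0,Hf)$ lies in $L^{H\backslash G}_A$, so $g_0\inv Hf\cap A\neq\emptyset$. This means some $h\in H$ satisfies $g_0\inv hf\in A$, that is, $hf\in g_0A$.

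Now set $A'=\pi\inv[B]\cap g_0A$. The previous paragraph shows that every coset in $B$ meets $g_0A$, so $\pi[A']=B$. Moreover $A'\subseteq g_0A$, and $g_0A$ is bounded because $\{g_0\}$ is bounded (by connectedness) and the bounded sets are closed under products. Therefore $A'$ is bounded, which gives the required representation.

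The only delicate point is the bookkeeping between left and right in $L^{H\backslash G}_A$ and in the left translate $g_0A$. No real obstacle is expected.
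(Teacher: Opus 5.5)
Your proof is correct and is essentially the paper's argument: for ($\Rightarrow$) you fix a base point $Hg_0\in B$ and take $\pi^{-1}[B]\cap g_0A$, exactly as the paper does. For ($\Leftarrow$) you check membership in $L^{H\backslash G}_{A^{-1}A}$ by hand, whereas the paper simply uses that $\pi$ is controlled, so $\pi[A]\times\pi[A]=(\pi\times\pi)[A\times A]$ is an entourage; both routes are fine.
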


\begin{proof}
Because the quotient map is bounded, it maps a bounded set $A$ to a bounded set $\pi[A]$. For the reverse implication, suppose $\emptyset \neq B\subseteq H\backslash G$ is bounded and find some bounded set $C\subseteq G$ so that 
$$
B\times B\;\;\subseteq\;\; L^{H\backslash G}_C\;=\;(\pi\times \pi)[L_C].
$$
Fix any $Hg\in B$. Note that, if $Hf\in B$, then $(Hg,Hf)\in B\times B$ and so $g\inv hf\in C$ and $hf\in gC$ for some $h\in H$, whence $Hf=\pi(hf)\in \pi[gC]$. Setting $A=gC\cap \pi\inv(B)$, which is bounded, we have that $B=\pi[A]$.
\end{proof}

When $H$ is a normal subgroup of a group $G$, we will use the more common notation $G/H$ for the quotient group rather than 
$H\backslash G$. Of course, this makes no difference and the quotient map $G\maps\pi G/H$ is formally the same function, $\pi(g)=gH=Hg$. But the notation $G/H$ does remind us that we are dealing with a group and not just a homogeneous space.

\begin{lemma}\label{QuotientLeftInvariant}
Let $G$ be a group with a left-invariant connected coarse structure $\mathcal E$ and  a normal subgroup $H$ with quotient map $G\maps\pi G/H$. Then  $\mathcal E_q(G/H)$ is left-invariant and is therefore  determined by its collection of bounded sets, which are the sets of the form $\pi[A]$ for $A\subseteq G$ bounded.
\end{lemma}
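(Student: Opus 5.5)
We need to show that $\mathcal E_q(G/H)$ is left-invariant. By Lemma \ref{ConcreteBasisQuotientCoarseStructure}, every entourage of $\mathcal E_q(G/H)$ is contained in some $L^{G/H}_A$ with $A\subseteq G$ bounded. So it suffices to show that each $L^{G/H}_A$ is itself invariant under left translation by elements of $G/H$. Let $(gH,fH)\in L^{G/H}_A$ and $kH\in G/H$. Then $g\inv Hf\cap A\neq\emptyset$. Since $H$ is normal, $(kg)\inv H(kf)=g\inv k\inv Hkf=g\inv Hf$, so $(kgH,kfH)\in L^{G/H}_A$ as well. The left translate by $kH$ is well defined because $G/H$ is a group. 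Hence $\widehat E\subseteq L^{G/H}_A$ whenever $E\subseteq L^{G/H}_A$, and $\widehat E\in\mathcal E_q(G/H)$. This proves left-invariance.

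Next we check connectedness, which the bijective correspondence of \cite{MR2948279} requires. Take $(gH,fH)$. The singleton $\{g\inv f\}$ is bounded in $G$, because $\mathcal E$ is connected, and $g\inv f\in g\inv Hf$. Thus $\{(gH,fH)\}\subseteq L^{G/H}_{\{g\inv f\}}$, so every singleton pair is an entourage.

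By the correspondence between left-invariant connected coarse structures and group bornologies, $\mathcal E_q(G/H)$ is therefore determined by its bounded sets. Lemma \ref{BoundedSetsQuotient} identifies these bounded sets as exactly the sets $\pi[A]$ with $A\subseteq G$ bounded. Explicitly, $\mathcal E_q(G/H)$ is the set of $E$ with $E\subseteq L_{\pi[A]}$; one can check directly that $L_{\pi[A]}=L^{G/H}_A$, since $g\inv Hf\cap A\neq\emptyset$ iff $g\inv fH\in\pi[A]$.

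There is no real obstacle here. The only point needing care is the use of normality, which is what makes $g\inv Hf$ invariant under simultaneous left multiplication.
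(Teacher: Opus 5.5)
Your proof is correct and follows the paper's argument. In both, the key step is that normality gives $(kg)^{-1}H(kf)=g^{-1}Hf$, so each basic entourage $L^{G/H}_A$ is invariant under left translation; your checks of connectedness and of $L_{\pi[A]}=L^{G/H}_A$ only make explicit what the paper leaves implicit when it invokes the bornology correspondence and Lemma \ref{BoundedSetsQuotient}.
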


\begin{proof}
For any $g,f,k\in G$ and bounded set $A\subseteq G$, we have
\maths{
(gH,fH)\in L^{G/H}_A
&\quad\Leftrightarrow\quad g\inv H f\cap A\neq \emptyset\\
&\quad\Leftrightarrow\quad g\inv k\inv H kf\cap A\neq \emptyset\\
&\quad\Leftrightarrow\quad (kgH,kfH)\in L^{G/H}_A,
}
which shows that $\widehat{L^{G/H}_A}=L^{G/H}_A\in \mathcal E_q(G/H)$ and thus that $\mathcal E_q(G/H)$ is left-invariant.
\end{proof}

Suppose $d$ is a {\em left-invariant} pseudometric on a group $G$, that is, 
$$
d(hg,hf)=d(g,f)
$$
for all $g,f,h\in G$. Then the coarse structure $\mathcal E_d(G)$ induced by $d$ is evidently left-invariant. Assume furthermore that  $H$ is a $d$-closed subgroup and consider the induced Hausdorff distance $d_{\mathrm{Haus}}$ on $H\backslash G$, namely,
\begin{align*}
    d_{\mathrm{Haus}}(Hg,Hf)
    \;=\;\max\bigg\{\sup_{h\in H}\inf_{k\in H}d(hg,kf),\sup_{k\in H}\inf_{h\in H}d(hg,kf)\bigg\}.
 \end{align*}
Because the right cosets $Hg$ may have infinite diameter, a priori, $d_{\mathrm{Haus}}$ could take infinite values, but we shall see that this is not the case.

 \begin{proposition}\label{prop:individual metrics}
Suppose  $d$ is a left-invariant pseudometric on a group $G$ and $H$ is a $d$-closed subgroup. Then, for all $Hg,Hf\in H\backslash G$, we have
$$
d_{\mathrm{Haus}}(Hg,Hf)\;=\; \inf_{h\in H}d(hg,f)
$$
Moreover, $\mathcal E_{d_{\rm Haus}}(H\backslash G)$ and $\mathcal U_{d_{\rm Haus}}(H\backslash G)$ equal the quotients of respectively $\mathcal E_{d}(G)$ and $\mathcal U_{d}(G)$.  
\end{proposition}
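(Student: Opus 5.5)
We first establish the formula for $d_{\mathrm{Haus}}$, using left-invariance of $d$ together with the fact that $H$ acts on the left.

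Write $\delta(Hg,Hf)=\inf_{h\in H}d(hg,f)$. This is well defined on cosets: replacing $f$ by $kf$ gives $d(hg,kf)=d(k\inv hg,f)$, and replacing $g$ by $kg$ merely reindexes $h$. But left-invariance is stated as $d(xg,xf)=d(g,f)$, so $d(hg,kf)=d(k\inv hg,f)$ holds after multiplying both arguments on the left by $k\inv$. Fix $h\in H$. Then
$$
\inf_{k\in H}d(hg,kf)=\inf_{k}d(g,h\inv kf)=\inf_{k'}d(g,k'f)=\inf_{k'}d(k'^{-1}g,f)=\delta(Hg,Hf).
$$
This value is independent of $h$, so the first supremum in $d_{\mathrm{Haus}}$ equals $\delta(Hg,Hf)$. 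Symmetrically, the second supremum equals $\inf_{h}d(hg,f)$ as well, since $\inf_h d(hg,kf)=\inf_h d(k\inv hg,f)$. Hence $d_{\mathrm{Haus}}(Hg,Hf)=\delta(Hg,Hf)$, which is finite.

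It remains to check that $d_{\mathrm{Haus}}$ is a pseudometric. Symmetry is immediate from the formula. For the triangle inequality, fix $g,f,e$ and choose $h,k$ with $d(hg,f)$ and $d(kf,e)$ within $\epsilon$ of the infima. Then
$$
d(khg,e)\le d(khg,kf)+d(kf,e)=d(hg,f)+d(kf,e),
$$
which gives the triangle inequality. The assumption that $H$ is $d$-closed is not needed for the formula. It only matters if one wants $d_{\mathrm{Haus}}$ to be a metric when $d$ is, and we do not pursue that here.

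For the coarse statement, note that $\mathcal E_d(G)$ is left-invariant and connected with bounded sets the $d$-bounded sets. So by Lemma \ref{ConcreteBasisQuotientCoarseStructure} the quotient coarse structure is generated by the sets $L^{H\backslash G}_A$ with $A$ $d$-bounded; we may take $A=B_r=\{x: d(1,x)\le r\}$. Now
$$
(Hg,Hf)\in L^{H\backslash G}_{B_r}\iff \exists h\in H\;\; d(1,g\inv hf)\le r \iff \exists h\in H\;\; d(g,hf)\le r .
$$
By the formula this condition implies $d_{\mathrm{Haus}}\le r$. Conversely, $d_{\mathrm{Haus}}(Hg,Hf)<r$ places the pair in $L^{H\backslash G}_{B_r}$. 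The two coarse structures therefore agree.

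For uniformities, we must show that the quotient uniformity $\mathcal U_q$ equals $\mathcal U_{d_{\rm Haus}}$. Define $\pi\colon G\to H\backslash G$ by $\pi(x)=Hx$. Then $\pi$ is $1$-Lipschitz for $d_{\mathrm{Haus}}$ by the formula, so $\mathcal U_{d_{\rm Haus}}\subseteq\mathcal U_q$. The reverse inclusion is the main obstacle, because the quotient uniformity is defined abstractly as the largest uniformity making $\pi$ uniformly continuous. I would handle it as follows. Let $U\in\mathcal U_q$. Since $\pi$ is uniformly continuous into $\mathcal U_q$, there is $\epsilon>0$ with $d(x,y)<\epsilon\Rightarrow(\pi x,\pi y)\in V$, where $V\circ V\subseteq U$ is symmetric and $V\in\mathcal U_q$. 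Now suppose $d_{\mathrm{Haus}}(Hg,Hf)<\epsilon$. Pick $h$ with $d(hg,f)<\epsilon$. Then $(Hg,Hf)=(\pi(hg),\pi(f))\in V\subseteq U$. So $U$ contains the $d_{\mathrm{Haus}}$-entourage of radius $\epsilon$, giving $\mathcal U_q\subseteq\mathcal U_{d_{\rm Haus}}$. The only subtle point is ensuring that the fiberwise choice of $h$ is legitimate, and this is exactly what the infimum formula provides.
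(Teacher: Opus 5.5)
Your proof is correct and follows the paper's argument: the infimum formula comes from left-invariance, the coarse part identifies $L^{H\backslash G}_{B_r}$ with $d_{\rm Haus}$-balls via Lemma~\ref{ConcreteBasisQuotientCoarseStructure}, and the uniform part uses the $1$-Lipschitz quotient map plus a fibrewise choice of $h$. You spell out details the paper calls immediate (the Hausdorff computation and the triangle inequality), and your passage to a symmetric $V$ with $V\circ V\subseteq U$ is harmless but unnecessary.
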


\begin{proof}
The equality $d_{\mathrm{Haus}}(Hg,Hf)\;=\; \inf_{h\in H}d(hg,f)$ is immediate from the left-invariance of $d$. In particular, $d_{\mathrm{Haus}}$ is finite-valued even though the cosets may have infinite diameter and so $d_{\rm Haus}$ is a pseudometric on $H\backslash G$.

Let  $\big(\mathcal E_{d}\big)_q(H\backslash G)$ and  $\big(\mathcal U_{d}\big)_q(H\backslash G)$ denote the quotients of  $\mathcal E_d(G)$ and  $\mathcal U_d(G)$. For $r>0$, let also $B_d(r)=\con{g\in G}{d(g,1)<r}$. 
By Lemma \ref{ConcreteBasisQuotientCoarseStructure}, we have that, for all $E\subseteq H\backslash G\times H\backslash G$, 
\maths{
E\in \big(\mathcal E_{d}\big)_q(H\backslash G)
&\quad\Leftrightarrow\quad \exists r \,\colon\;  E\subseteq L^{H\backslash G}_{B_d(r)}\\
&\quad\Leftrightarrow\quad \exists r \;\forall (Hg,Hf)\in E\,\colon \;   g\inv Hf\cap B_d(r)\neq\emptyset\\
&\quad\Leftrightarrow\quad \sup_{ (Hg,Hf)\in E}\; d_{\rm Haus}(Hg,Hf)=\sup_{ (Hg,Hf)\in E}\;\inf_{h\in H}d(1, g\inv hf)<\infty\\
&\quad\Leftrightarrow\quad E\in \mathcal E_{d_{\rm Haus}}(H\backslash G),
}
showing that $ \big(\mathcal E_{d}\big)_q(H\backslash G)=\mathcal E_{d_{\rm Haus}}(H\backslash G)$.

To establish the equality $ \big(\mathcal U_{d}\big)_q(H\backslash G)=\mathcal U_{d_{\rm Haus}}(H\backslash G)$, let us first note that the quotient map $(G,d)\maps\pi \big(H\backslash G,d_{\rm Haus}\big)$ is Lipschitz and hence uniformly continuous. Because the quotient uniformity is the largest for which the quotient map from $(G,d)$ is uniformly continuous, it follows that 
$\mathcal U_{d_{\rm Haus}}(H\backslash G)\subseteq  \big(\mathcal U_{d}\big)_q(H\backslash G)$. Conversely, suppose $E\in  \big(\mathcal U_{d}\big)_q(H\backslash G)$. Then $(\pi\times \pi)\inv(E)\in \mathcal U_d(G)$ and so there is some $\epsilon>0$ so that $\big(\pi(g),\pi(f)\big)\in E$ whenever $d(g,f)<\epsilon$.  Therefore, if $d_{\rm Haus}(Hg,Hf)<\epsilon$, we have $d(hg,f)<\epsilon$ for some $h\in H$ and hence $(Hg,Hf)=\big(\pi(hg),\pi(f)\big)\in E$. It follows that $E\in \mathcal U_{d_{\rm Haus}}(H\backslash G)$, proving that  $ \big(\mathcal U_{d}\big)_q(H\backslash G)=\mathcal U_{d_{\rm Haus}}(H\backslash G)$.
\end{proof}


\section{The coarse structure of a topological group}
Assume now that $G$ is a topological group. The {\em left-uniform structure} $\mathcal U_L(G)$ on $G$ is that generated by all entourages 
$$
L_V=\Con{(g,f)\in G\times G}{g\inv f\in V},
$$
where $V$ ranges over identity neighbourhoods in $G$. It is well-known that this can alternatively be described as the union
$$
\mathcal U_L(G)=\bigcup_d\mathcal U_d(G)
$$
of the uniformities $\mathcal U_d(G)$ induced by the collection of all continuous {\em left-invariant} pseudometrics $d$ on $G$, i.e., such that, for all $g,f,h\in  G$, 
$$
d(hg,hf)=d(g,f).
$$
Thus, for $E\subseteq G\times G$,
\begin{equation}
E\in \mathcal U_L(G) \quad\Leftrightarrow\quad \exists d\; \inf_{(g,f)\notin E}d(g,f)>0.
\end{equation}

In analogy with the left-uniform structure, we may define a left coarse structure on $G$.
\begin{definition}\cite{MR4327092}
    Let $G$ be a topological group. The {\em left coarse structure} $\mathcal E_L(G)$ is given by
$$
\mathcal E_L(G)=\bigcap_d\mathcal E_d(G),
$$
where again $d$ ranges over  continuous  left-invariant pseudometrics.
\end{definition}
In other words,
\begin{equation}
E\in \mathcal E_L(G) \quad\Leftrightarrow\quad \forall d\; \sup_{(g,f)\in E}d(g,f)<\infty.
\end{equation}
In particular, for any subset $A\subseteq G$, we find that $A$ is $\ku E_L(G)$-bounded if and only if 
$$
A \text{ is $\ku E_L(G)$-bounded }\quad\Leftrightarrow\quad
\forall d\quad {\sf diam}_d(A)<\infty.
$$
Because $\ku E_L(G)$ is left-invariant, this in turn shows that
$$
E\in \ku E_L(G)\quad\Leftrightarrow\quad E\subseteq L_A \text{ for some $\ku E_L(G)$-bounded set }A.
$$

 \begin{lemma}
Suppose  $d$ is a continuous left-invariant pseudometric on a topological group $G$ and $H$ is a closed subgroup of $G$. Then the induced Hausdorff distance  $d_{\rm Haus}$ is a continuous pseudometric on $H\backslash G$. Similarly, if $d$ is a compatible left-invariant metric on $G$, then $d_{\rm Haus}$ is a compatible metric on $H\backslash G$.\end{lemma}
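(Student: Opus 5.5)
Throughout, $H\backslash G$ carries the quotient topology, which makes $G\maps\pi H\backslash G$ continuous and open. By Proposition \ref{prop:individual metrics}, $d_{\rm Haus}(Hg,Hf)=\inf_{h\in H}d(hg,f)$. This is a finite pseudometric, and the formula expresses $d_{\rm Haus}(Hg,Hf)$ as the $d$-distance from $f$ to the set $Hg$. The plan is to show first that $d_{\rm Haus}$ is continuous on $H\backslash G\times H\backslash G$, and then, under the extra assumption that $d$ is a compatible metric, that it separates points and generates the quotient topology.

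\textbf{Continuity.} Since $d$ is left-invariant, $d(hg,f)=d(g,h\inv f)$. Hence for $g,g',f,f'\in G$,
$$
|d_{\rm Haus}(Hg,Hf)-d_{\rm Haus}(Hg',Hf')|\le d_{\rm Haus}(Hg,Hg')+d_{\rm Haus}(Hf,Hf')\le d(g,g')+d(f,f').
$$
So the map $(g,f)\mapsto d_{\rm Haus}(Hg,Hf)$ is continuous on $G\times G$, because $d$ is continuous. The map $\pi\times\pi$ is open and surjective, hence a quotient map. Therefore $d_{\rm Haus}$ is continuous on $H\backslash G\times H\backslash G$. It suffices to check continuity at each point: given $(Hg,Hf)$ and $\epsilon>0$, the set $\pi[B_d(g,\epsilon/2)]\times\pi[B_d(f,\epsilon/2)]$ is an open neighbourhood of $(Hg,Hf)$ on which $d_{\rm Haus}$ varies by less than $\epsilon$.

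\textbf{Compatible metric case.} Two things must be shown: $d_{\rm Haus}$ separates points, and its balls form a neighbourhood base for the quotient topology.

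For separation, suppose $d_{\rm Haus}(Hg,Hf)=0$. Then $f$ lies in the $d$-closure of $Hg$. Since $d$ is compatible, the $d$-closure equals the topological closure, and $Hg$ is closed because $H$ is closed. So $f\in Hg$, i.e.\ $Hf=Hg$.

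For the topology, continuity (already shown) gives that $d_{\rm Haus}$-balls are open in the quotient topology. Conversely, let $U\ni Hg$ be open in $H\backslash G$. Then $\pi\inv(U)$ is an open neighbourhood of $g$, so it contains $B_d(g,\epsilon)$ for some $\epsilon>0$. If $d_{\rm Haus}(Hg,Hf)<\epsilon$, pick $h\in H$ with $d(hg,f)<\epsilon$. By left-invariance, $d(g,h\inv f)<\epsilon$, so $h\inv f\in\pi\inv(U)$ and hence $Hf\in U$. Thus the $d_{\rm Haus}$-ball of radius $\epsilon$ about $Hg$ is contained in $U$.

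The one delicate point is separation, which requires that the $d$-closure of $Hg$ be $Hg$ itself. This is exactly why the compatibility of $d$ is needed, and why $H$ must be assumed closed.
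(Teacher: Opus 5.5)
Your proof is correct and follows the same route as the paper. You get continuity on $G\times G$ and pass to $H\backslash G\times H\backslash G$ through the open surjection $\pi\times\pi$, and you get compatibility by choosing $h\in H$ with $d(hg,f)$ small. The differences are only in detail: you supply the Lipschitz estimate behind the paper's ``evidently continuous'', spell out why $d_{\rm Haus}(Hg,Hf)=0$ forces $Hf=Hg$, and use ball neighbourhoods where the paper uses nets.
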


\begin{proof}
Assume first that  $d$ is a continuous left-invariant pseudometric. Because the quotient map $G\maps\pi H\backslash G$ is surjective, continuous and open, so is the product map $G\times G\maps{\pi\times \pi}H\backslash G\times H\backslash G$. Consequently, $H\backslash G\times H\backslash G\maps {d_{\rm Haus}} \mathbb R$ is continuous if and only if the composition $d_{\rm Haus}\circ(\pi\times \pi)$ is continuous. However, $d_{\rm Haus}\big(\pi(g),\pi(f)\big)=\inf_{h\in H}d(g,hf)$ and the latter expression is evidently continuous in $(g,f)$, so we conclude that $d_{\rm Haus}$ is continuous.

Assume now that $d$ is a compatible left-invariant metric on $G$ and  suppose $\inf_{h\in H}d(g,hf_i)=d_{\rm Haus}(Hg,Hf_i)\conv{}i0$ for some net $(Hf_i)_i$ and some $Hg$. This means that there are $h_i\in H$ so that $d(g, h_if_i)\conv{}i0$, whereby $h_if_i\conv{}ig$ and hence $Hf_i=\pi(h_if_i)\conv{}i\pi(g)=Hg$. Since $d_{\rm Haus}$ is already known to be continuous, this shows that 
$$
d_{\rm Haus}(Hg,Hf_i)\conv{}i0\;\Leftrightarrow\; Hf_i\conv{}iHg
$$
and thus that $d_{\rm Haus}$ is compatible with the topology of $H\backslash G$. That it is a metric follows from $H$ being closed in $G$.
\end{proof}

\begin{definition}
    Let $G$ be a topological group and $H$ a closed subgroup. The {\em Hausdorff distance} coarse structure on $H\backslash G$ is given by 
\maths{
    \mathcal{E}_{\mathrm{Haus}}(H\backslash G)
    = \bigcap_d\mathcal E_{d_{\rm Haus}}(H\backslash G),   
    }
    where $d$ ranges over all continuous left-invariant pseudometrics on $G$.
\end{definition}

By definition, the quotient coarse structure $\mathcal{E}_{q}(H\backslash G)$ is obtained by first taking the intersection $\mathcal{E}_{L}(G)=\bigcap_d\mathcal{E}_d(G)$ and then passing to the quotient. On the other hand, by Proposition \ref{prop:individual metrics}, $\mathcal{E}_{\mathrm{Haus}}(H\backslash G)$ is obtained by first taking the quotients of each of the coarse structures $\mathcal E_d(G)$ and subsequently taking the intersection of these. That this second procedure is not entirely frivolous is exemplified in the following.

\begin{proposition}\label{prop:quotient coarse}
Let $G$ be a topological group and $H$ a closed subgroup. Then 
$$
\mathcal E_q(H\backslash G)\;\subseteq\; \mathcal E_{\rm Haus}(H\backslash G).
$$
If furthermore $H$ is normal in $G$, then every continuous left-invariant pseudometric on $G/H$ has the form $d_{\rm Haus}$ for some continuous left-invariant pseudometric $d$ on $G$ and hence
     $$
     \mathcal E_L(G/H)=\mathcal E_{\rm Haus}(G/H).
     $$
\end{proposition}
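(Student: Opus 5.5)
For the inclusion $\mathcal E_q(H\backslash G)\subseteq \mathcal E_{\rm Haus}(H\backslash G)$ I will compare generators. By Lemma \ref{ConcreteBasisQuotientCoarseStructure}, applied to $\mathcal E_L(G)$ (left-invariant and connected), every $E\in\mathcal E_q(H\backslash G)$ lies in some $L^{H\backslash G}_A$ with $A$ an $\mathcal E_L(G)$-bounded set. Now fix a continuous left-invariant pseudometric $d$ on $G$ and set $r={\sf diam}_d(A\cup\{1\})$, which is finite. If $(Hg,Hf)\in L^{H\backslash G}_A$, pick $h\in H$ with $g\inv hf\in A$. By Proposition \ref{prop:individual metrics},
$$
d_{\rm Haus}(Hg,Hf)=\inf_{k\in H}d(kg,f)\leqslant d(h\inv g,f)=d(1,g\inv hf)\leqslant r.
$$
Hence $E\in\mathcal E_{d_{\rm Haus}}(H\backslash G)$ for every $d$, and therefore $E\in\mathcal E_{\rm Haus}(H\backslash G)$. (The $d$-closedness hypothesis in Proposition \ref{prop:individual metrics} is only used for the formula, which is immediate from left-invariance anyway.)

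For the normal case, let $\rho$ be a continuous left-invariant pseudometric on $G/H$. The natural candidate is $d(g,f)=\rho(gH,fH)$. It is a pseudometric on $G$, continuous because $\pi$ is continuous, and left-invariant because $\pi$ is a homomorphism and $\rho$ is left-invariant. Then
$$
d_{\rm Haus}(gH,fH)=\inf_{h\in H}\rho(hgH,fH)=\rho(gH,fH),
$$
since $hgH=gH$ when $H$ is normal. So $\rho=d_{\rm Haus}$.

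It follows that the family $\{d_{\rm Haus}\}$, as $d$ ranges over continuous left-invariant pseudometrics on $G$, contains every continuous left-invariant pseudometric on $G/H$. Conversely, each $d_{\rm Haus}$ is itself one of these. Continuity comes from the earlier lemma. Left-invariance follows from $d_{\rm Haus}(kgH,kfH)=\inf_h d(hkg,kf)=\inf_{h'} d(kh'g,kf)$, using $hk=kh'$ by normality, and then left-invariance of $d$. The two families therefore coincide, and intersecting the induced coarse structures gives $\mathcal E_L(G/H)=\mathcal E_{\rm Haus}(G/H)$.

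The only step needing care is checking that $d_{\rm Haus}$ is left-invariant in the normal case, so that the two indexing families really coincide; it is the one place where normality is used, besides the collapse $hgH=gH$.
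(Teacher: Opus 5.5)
Your proof is correct and follows essentially the same route as the paper. The one cosmetic difference is in the inclusion $\mathcal E_q(H\backslash G)\subseteq\mathcal E_{\rm Haus}(H\backslash G)$: you check it on the basic entourages $L^{H\backslash G}_A$ from Lemma \ref{ConcreteBasisQuotientCoarseStructure}, whereas the paper notes that $\pi$ is $1$-Lipschitz from $d$ to $d_{\rm Haus}$ and uses the minimality of the quotient coarse structure, which rests on the same estimate. The normal case, pulling $\rho$ back to $\rho\circ(\pi\times\pi)$, matches the paper, and your explicit check that $d_{\rm Haus}$ is left-invariant fills in a step the paper leaves unproved.
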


\begin{proof}
    Observe that, if $G\maps\pi H\backslash G$ denotes the quotient map and $E$ is a coarse entourage on $G$, then
\maths{
\sup_{(g,f)\in E}d_{\text{Haus}}\big(\pi(g),\pi(f)\big)
\;=\;
\sup_{(g,f)\in E}d_{\text{Haus}}\big(Hg,Hf\big)
\;\leqslant \; \sup_{(g,f)\in E}d(g,f)
\;<\; \infty
}
for every continuous left-invariant pseudometric $d$ on $G$. It therefore follows that $(\pi\times \pi)[E]\in \mathcal{E}_{\mathrm{Haus}}(H\backslash G)$, showing that $\pi$ is a controlled map with respect to the Hausdorff distance coarse structure on $H\backslash G$. Because the quotient coarse structure is the smallest for which $\pi$ is controlled, this immediately shows that $\mathcal E_q(H\backslash G)\;\subseteq\; \mathcal E_{\rm Haus}(H\backslash G)$.

Suppose now that $H$ is furthermore normal in $G$. Observe that, if $d$ is a continuous left-invariant pseudometric on $G$, then $d_{\rm Haus}$ is a continuous left-invariant pseudometric on $G/H$. Conversely, if $D$ is any continuous left-invariant pseudometric on $G/H$, then $d=D\circ(\pi\times \pi)$ is continuous and left-invariant on $G$ and satisfies 
$D= d_{\rm Haus}$.
It thus follows that
$$
 \mathcal E_L(G/H) 
 \;=\;\bigcap_d\mathcal E_{d_{\rm Haus}}(G/H)
\;= \;  \mathcal E_{\rm Haus}(G/H)
$$
as claimed.
\end{proof}

When $H$ is a closed normal subgroup of $G$, there are two natural coarse structures on the quotient $G/H$:  Either take the  topological quotient group $G/H$ and then produce the associated left coarse structure, i.e., $\mathcal E_L(G/H)= \mathcal E_{\rm Haus}(G/H)$. Alternatively, we first produce the left coarse structure $\mathcal E_L(G)$ and then pass to the quotient $\mathcal E_q(G/H)$. This naturally raises the question whether these two approaches result in the same object. As we shall see, it does not in general, so instead we aim to understand when it does. We formulate this more broadly for the case when $H$ is no longer assumed to be normal in $G$.

\begin{question}\label{quest:1}
What are the conditions on  topological groups $G$ and closed subgroups $H$ that ensure that 
$\mathcal{E}_{q}(H\backslash G)=\mathcal{E}_{\mathrm{Haus}}(H\backslash G)$\;? 
\end{question}

Returning again to the case of normal subgroups, we can reformulate this in terms of liftings of bounded sets.

\begin{corollary}\label{cor:lifting crit}
Let $G$ be a topological group and $H$ a closed normal subgroup. Then 
$$
\mathcal E_q(G/H)=\mathcal E_L(G/H)=\mathcal E_{\rm Haus}(G/H)
$$ 
if and only if every $\mathcal E_{\rm Haus}(G/H)$-bounded set is of the form $\pi[A]$ for some bounded set $A\subseteq G$.
\end{corollary}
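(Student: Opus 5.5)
The plan is to combine Proposition \ref{prop:quotient coarse} with Lemma \ref{QuotientLeftInvariant}. By Proposition \ref{prop:quotient coarse}, since $H$ is normal we already know $\mathcal E_L(G/H)=\mathcal E_{\rm Haus}(G/H)$ and $\mathcal E_q(G/H)\subseteq \mathcal E_{\rm Haus}(G/H)$. So the claimed equality of all three structures reduces to the single equality $\mathcal E_q(G/H)=\mathcal E_L(G/H)$, and it suffices to show that this holds exactly when every $\mathcal E_{\rm Haus}(G/H)$-bounded set lifts to a bounded set.

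Next I will use that both structures are left-invariant and connected on the group $G/H$. For $\mathcal E_q(G/H)$, left-invariance is Lemma \ref{QuotientLeftInvariant}, and connectedness follows because $\mathcal E_L(G)$ is connected (singletons are bounded) and $\pi$ is surjective. $\mathcal E_L(G/H)$ is the left coarse structure of a topological group, hence left-invariant and connected. By the bijective correspondence between left-invariant connected coarse structures and group bornologies (from \cite{MR2948279}, recalled earlier), two such structures coincide if and only if they have the same bounded sets. Thus $\mathcal E_q(G/H)=\mathcal E_L(G/H)$ if and only if the two bornologies agree.

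Finally, I identify the bornologies. By Lemma \ref{BoundedSetsQuotient} (or Lemma \ref{QuotientLeftInvariant}), the $\mathcal E_q(G/H)$-bounded sets are exactly the sets $\pi[A]$ with $A\subseteq G$ bounded. The $\mathcal E_L(G/H)$-bounded sets are exactly the $\mathcal E_{\rm Haus}(G/H)$-bounded sets. The inclusion $\mathcal E_q\subseteq\mathcal E_{\rm Haus}$ already shows that every $\pi[A]$ is $\mathcal E_{\rm Haus}$-bounded. Hence the bornologies agree if and only if every $\mathcal E_{\rm Haus}(G/H)$-bounded set is of the form $\pi[A]$ with $A$ bounded, which gives both directions of the corollary.

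There is no real obstacle here; the only point needing care is invoking the bornology correspondence, which requires verifying that both structures are connected and left-invariant on the same group $G/H$.
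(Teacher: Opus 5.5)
Your proposal is correct and follows essentially the same route as the paper: both $\mathcal E_q(G/H)$ and $\mathcal E_L(G/H)=\mathcal E_{\rm Haus}(G/H)$ are connected and left-invariant, hence determined by their bornologies, and the $\mathcal E_q(G/H)$-bounded sets are exactly the images $\pi[A]$ of bounded sets $A\subseteq G$. You are slightly more explicit than the paper in two places: you check that $\mathcal E_q(G/H)$ is connected, and you use $\mathcal E_q(G/H)\subseteq\mathcal E_{\rm Haus}(G/H)$ to see that every such $\pi[A]$ is automatically $\mathcal E_{\rm Haus}(G/H)$-bounded.
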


\begin{proof}
By Lemma \ref{QuotientLeftInvariant}, $\mathcal E_q(G/H)$ is completely determined by its collection of bounded sets, which are exactly the sets of the form $\pi[A]$ for $A\subseteq G$ bounded in $\mathcal E_L(G)$. Similarly, $\mathcal E_L(G/H)=\mathcal E_{\rm Haus}(G/H)$ is connected and left-invariant and therefore also determined by its bounded sets. Therefore,  $\mathcal E_q(G/H)=\mathcal E_L(G/H)=\mathcal E_{\rm Haus}(G/H)$ if and only if the coarse structures have the same bounded sets or, equivalently, if and only if every $\mathcal E_{\rm Haus}(G/H)$-bounded set is of the form $\pi[A]$ for some bounded set $A\subseteq G$.
\end{proof}

\begin{question}\label{quest:2}
  Let $G$ be a topological group and $H$ a closed  subgroup.  Is it true that $\mathcal{E}_{q}(H\backslash G)=\mathcal{E}_{\mathrm{Haus}}(H\backslash G)$  if and only if every  $\mathcal{E}_{\mathrm{Haus}}(H\backslash G)$-bounded subset is of the form $\pi[A]$ for some bounded set $A\subseteq G$?
\end{question}

Similar to the case of coarse structures, if $H$ is a closed subgroup of a topological group $G$, we let $\mathcal U_{\rm Haus}(H\backslash G)$ denote the uniform structure on $H\backslash G$  given by
$$
\mathcal U_{\rm Haus}(H\backslash G)=\bigcup_d\mathcal U_{d_{\rm Haus}}(H\backslash G),
$$
where $d$ ranges over continuous left-invariant pseudometrics on $G$. Contrary to the case of coarse structures, this construction always produces the quotient uniformity on $H\backslash G$.

\begin{proposition}
Let $G$ be a topological group and $H$ a closed subgroup. Then 
$$
\mathcal U_{\rm Haus}(H\backslash G)\;=\; \mathcal U_q(H\backslash G).
$$
If furthermore $H$ is normal in $G$, then
     $$
    \mathcal U_L(G/H)\;=\;\mathcal U_{\rm Haus}(G/H)\;=\; \mathcal U_q(G/H).
     $$
\end{proposition}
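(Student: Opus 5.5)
The plan is to compute both uniformities through the individual pseudometrics $d$, using Proposition \ref{prop:individual metrics}. That proposition says that, for each continuous left-invariant pseudometric $d$ on $G$, the quotient of $\mathcal U_d(G)$ is exactly $\mathcal U_{d_{\rm Haus}}(H\backslash G)$. (Since $H$ is closed in $G$, it is $d$-closed enough for the formula $d_{\rm Haus}(Hg,Hf)=\inf_{h\in H}d(hg,f)$; only left-invariance is used there.) Thus
$$
\mathcal U_{\rm Haus}(H\backslash G)=\bigcup_d \big(\mathcal U_d\big)_q(H\backslash G),
$$
and it remains to show that this union equals $\mathcal U_q(H\backslash G)$, the quotient of $\mathcal U_L(G)=\bigcup_d\mathcal U_d(G)$. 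The point, in contrast with the coarse case, is that taking the largest uniformity making $\pi$ uniformly continuous commutes with directed unions.

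For the inclusion $\subseteq$, note that each $\pi\colon (G,\mathcal U_L(G))\to (H\backslash G,\mathcal U_{d_{\rm Haus}})$ is uniformly continuous. Indeed, $\pi$ is $1$-Lipschitz from $(G,d)$, and $\mathcal U_d(G)\subseteq \mathcal U_L(G)$. A union of uniformities that is directed (take $\max(d,d')$) is a uniformity, and $\pi$ is uniformly continuous into it. Maximality of the quotient uniformity then gives $\mathcal U_{\rm Haus}\subseteq \mathcal U_q$.

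For $\supseteq$, I take $E\in\mathcal U_q(H\backslash G)$. This is the main step: a general member of the quotient uniformity is not merely a set with $(\pi\times\pi)^{-1}(E)\in \mathcal U_L(G)$, since the quotient uniformity must also satisfy the composition axiom. I will use the standard fact that $E$ belongs to a uniformity exactly when there is a sequence $E=E_0\supseteq E_1\supseteq\cdots$ of members with $E_{n+1}\circ E_{n+1}\subseteq E_n$ and $E_n^{\sf T}=E_n$. Pulling back, each $(\pi\times\pi)^{-1}(E_n)$ lies in $\mathcal U_L(G)$. The Birkhoff–Kakutani construction then gives a single pseudometric $\rho$ on $H\backslash G$ with $\{\rho<2^{-n}\}\subseteq E_n$ and with $\rho\circ(\pi\times\pi)$ uniformly continuous for $\mathcal U_L(G)$.

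I expect the one delicate point to be upgrading this to a continuous left-invariant pseudometric on $G$. I would set $d(g,f)=\sup_{k\in G}\rho(\pi(kg),\pi(kf))$, or alternatively invoke directly that a uniformly continuous function from $(G,\mathcal U_L)$ is dominated in oscillation by some continuous left-invariant $d$. The estimate to aim for is $\rho(Hg,Hf)\le C\, d(hg,f)$ up to a modulus, for every $h\in H$. Then $d_{\rm Haus}(Hg,Hf)<\epsilon$ forces $\rho(Hg,Hf)$ small, and hence $(Hg,Hf)\in E$. This gives $E\in\mathcal U_{d_{\rm Haus}}(H\backslash G)$.

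Finally, when $H$ is normal, Proposition \ref{prop:quotient coarse}'s observation applies: every continuous left-invariant pseudometric $D$ on $G/H$ equals $d_{\rm Haus}$ for $d=D\circ(\pi\times\pi)$, and conversely each $d_{\rm Haus}$ is such a $D$. Hence $\mathcal U_L(G/H)=\bigcup_D\mathcal U_D=\mathcal U_{\rm Haus}(G/H)$, which combined with the first part gives the chain of equalities.
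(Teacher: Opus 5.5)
Your proof is correct. Your $\subseteq$ direction and your normal case coincide with the paper's; your check that $\bigcup_d\mathcal U_{d_{\rm Haus}}$ is directed via $\max(d,d')$ is a detail the paper leaves implicit.

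The routes differ only in $\supseteq$. The paper uses just the forward implication $E\in\mathcal U_q(H\backslash G)\Rightarrow(\pi\times\pi)^{-1}(E)\in\mathcal U_L(G)$, which holds because $\pi$ is uniformly continuous into $\mathcal U_q$. Since $\mathcal U_L(G)=\bigcup_d\mathcal U_d(G)$, a fact you already invoke, there are $d$ and $\epsilon>0$ with $d(g,f)<\epsilon\Rightarrow(Hg,Hf)\in E$. The formula $d_{\rm Haus}(Hg,Hf)=\inf_{h\in H}d(hg,f)$ then gives $\{d_{\rm Haus}<\epsilon\}\subseteq E$ at once. This is verbatim the uniform half of the proof of Proposition \ref{prop:individual metrics}, with $\mathcal U_L(G)$ in place of $\mathcal U_d(G)$.

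Your worry about the composition axiom concerns the converse implication, which is never needed. In fact the paper's argument shows that even the naive family $\{E\mid(\pi\times\pi)^{-1}(E)\in\mathcal U_L(G)\}$ lies in $\mathcal U_{\rm Haus}$, and hence equals $\mathcal U_q$.

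Your detour does work, though you leave a couple of checks implicit:
\begin{itemize}
\item $d(g,f)=\sup_{k\in G}\rho(Hkg,Hkf)$ is left-invariant, by the substitution $k\mapsto kl$.
\item It is continuous: $\rho\circ(\pi\times\pi)$ is left-uniformly continuous and $(kg)^{-1}(kf)=g^{-1}f$, so $d$ is left-uniformly continuous.
\item Taking $k=1$ gives $\rho(Hg,Hf)=\rho(Hhg,Hf)\leqslant d(hg,f)$ for all $h\in H$, so $\rho\leqslant d_{\rm Haus}$ exactly, with no constant or modulus.
\end{itemize}
What the detour buys is only a hands-on re-derivation, adapted to the quotient, of $\mathcal U_L(G)=\bigcup_d\mathcal U_d(G)$, which you could have applied directly to $(\pi\times\pi)^{-1}(E)$.

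One minor point: in your chain you should take a symmetric $E_0\subseteq E$ rather than $E_0=E$.
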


\begin{proof}
Observe first that, if $d$ is a continuous left-invariant pseudometric on $G$, then the quotient map $(G,d)\maps\pi(H\backslash G, d_{\rm Haus})$ is Lipschitz and thus uniformly continuous. Because $\mathcal U_L(G)=\bigcup_d\mathcal U_d(G)$, whereas $\mathcal U_{\rm Haus}(H\backslash G)=\bigcup_d\mathcal U_{d_{\rm Haus}}(H\backslash G)$, it follows that 
$$
\big(G,\mathcal U_L(G)\big)\maps\pi\big(H\backslash G, \mathcal U_{\rm Haus}(H\backslash G)\big)
$$
is uniformly continuous. But $\mathcal U_q(H\backslash G)$ is the largest uniform structure making $\pi$ uniformly continuous, so $\mathcal U_{\rm Haus}(H\backslash G)\subseteq \mathcal U_q(H\backslash G)$.

To establish the reverse inclusion, suppose $E\in \mathcal U_q(H\backslash G)$ is given. Because $\pi$ is uniformly continuous with respect to $\mathcal U_q(H\backslash G)$ and $\mathcal U_L(G)=\bigcup_d\mathcal U_d(G)$, there is some continuous left-invariant pseudometric $d$ and some $\epsilon>0$ so that
$$
d(g,f)<\epsilon\;\Rightarrow\; \big(\pi(g),\pi(f)\big)\in E,
$$
whereby
$$
d_{\rm Haus}\big(\pi(g),\pi(f)\big)<\epsilon\;\Rightarrow\; \big(\pi(g),\pi(f)\big)\in E.
$$
It follows that $E\in \mathcal U_{d_{\rm Haus}}(H\backslash G)\subseteq \mathcal U_{{\rm Haus}}(H\backslash G)$ as required.

Assume now that $H$ is also normal in $G$. Then 
$$
\mathcal U_{\rm Haus}(G/H)
\;=\;\bigcup_d\mathcal U_{d_{\rm Haus}}(G/H)
\;\subseteq\;
\bigcup_D\mathcal U_{D}(G/H)
\;=\;\mathcal U_L(G/H),
$$
where $D$ and $d$ vary over continuous left-invariant pseudometrics on $G/H$, respectively  on $G$. On the other hand, if $D$ is a continuous left-invariant pseudometric on $G/H$, then $d=D\circ(\pi\times \pi)$ is a continuous left-invariant pseudometric on $G$ for which $d_{\rm Haus}=D$. So the inclusion above is actually an equality.
\end{proof}



\section{The Roelcke coarse structure}
As is well-known, both the class of coarse structures and the class of uniformities on a given set $X$ form complete (non-distributive) lattices. Here, for two uniformities $\mathcal U$ and $\mathcal V$ on $X$, we set
$$
\ku U \leqslant \ku V \quad\Leftrightarrow \quad \ku U\subseteq \ku V,
$$
whereas for two coarse structures $\ku E$ and $\ku F$, we let 
$$
\ku E \leqslant \ku F \quad\Leftrightarrow \quad \ku E\supseteq \ku F.
$$
Because an arbitrary intersection of coarse structures is again a coarse structure, we find that
$$
\bigvee_{i\in I}\ku E_i=\bigcap_{i\in I}\ku E_i
$$
for any family $(\ku E_i)_{i\in I}$. On the other hand, $\bigwedge_{i\in I}\ku E_i$ is the unique coarse structure generated by the union $\bigcup_{i\in I}\ku E_i$ and is not, in general, the union itself.

On any group $G$, we have a duality between the left and right-invariant pseudometrics, which is given by the involution
$d\mapsto \check d$, where
$$
\check d(g,f)=d(g\inv, f\inv)
$$
for $g,f\in G$. To simplify the exposition, we shall henceforth assume that $d$ denotes a left-invariant pseudometric on $G$, whereas $\check d$ denotes its corresponding right-invariant dual.

For a topological group $G$, the {\em right-coarse} structure $\mathcal{E}_{R}(G)$ and {\em right-uniform} structure $\mathcal U_R(G)$ can be described similarly to their left analogues  by
$$
\mathcal E_R(G) \;=\;\bigcap_d\mathcal E_{\check d}\;=\;\bigvee_d\mathcal E_{\check d}
\qquad\qquad \&\qquad\qquad 
\mathcal U_R(G) \;=\;\bigcup_d\mathcal U_{\check d}\;=\; \bigvee_d\mathcal U_{\check d}
$$
where again $d$ varies over the continuous left-invariant pseudometrics on $G$.

The {\em Roelcke uniformity} on a topological group $G$ \cite{MR644485} is defined via
\begin{equation}\label{eq:Roelcke unif}
\mathcal{U}_{\wedge}(G) \;=\; \ku U_L(G)\wedge \ku U_R(G)\;=\;\Big( \bigvee_d\ku U_d(G)\big) \wedge \Big(\bigvee_d\ku U_{\check d}(G)\Big)
\end{equation}
and similarly, the {\em Roelcke-coarse structure} $\mathcal{E}_{\wedge}(G)$ introduced by J. Zielinski \cite{MR4349657} is defined by 
\begin{equation}\label{eq:Roelcke coarse}
\mathcal{E}_{\wedge}(G) \;=\; \ku E_L(G)\wedge \ku E_R(G)\;=\;\Big( \bigvee_d\ku E_d(G)\big) \wedge \Big(\bigvee_d\ku E_{\check d}(G)\Big).
\end{equation}
The definition above introduces $\ku E_R(G)$, and thus $\mathcal{E}_{\wedge}(G)$, via continuous right-invariant pseudometrics, but of course can also be described in terms of the group bornology of $\ku E_R(G)$-bounded sets. Let us first note that $\ku E_L(G)$ and $\ku E_R(G)$ have the same bounded sets. Indeed, for any $A\subseteq G$,
\maths{
A \text{ is $\ku E_L(G)$-bounded }
&\quad\Leftrightarrow \quad \forall d\;\;\; {\sf diam}_d(A)<\infty\\
&\quad\Leftrightarrow \quad \forall d\;\;\;\sup_{g\in A}\;\check d(1,g)=\sup_{g\in A}\;d(1, g\inv )=\sup_{g\in A}d(g,1)<\infty\\
&\quad\Leftrightarrow \quad \forall d\;\;\; {\sf diam}_{\check d}(A)<\infty\\
&\quad\Leftrightarrow \quad A \text{ is $\ku E_R(G)$-bounded}.\\
}
So henceforth, we shall simply say {\em bounded} to mean $\ku E_L(G)$ or, equivalently $\ku E_R(G)$-bounded.
For a subset $A\subseteq G$, set  
$$
R_A=\Con{(g,f)\in G\times G}{gf\inv \in A}.
$$
Then, exactly as for $\ku E_L(G)$, we find that
\begin{equation}\label{eq:Right coarse}
E\in \ku E_R(G) \quad\Leftrightarrow \quad E\subseteq R_A \text{ for some bounded set }A.
\end{equation}

Note now that, for $A,B\subseteq G$, we have
\maths{
(g,f)\in L_A\circ R_{B}
\;\Leftrightarrow\;   \exists h\;\big( g\inv h\in A\;\&\; hf\inv \in B\big) \;\Leftrightarrow\;  gA\cap B f\neq\emptyset
\;\Leftrightarrow\;  (g,f)\in R_{B}\circ L_{A}
}
and so $L_A\circ R_{B}=R_B\circ L_A$. In particular, 
$$
\big(L_A\circ R_A\big)\circ \big(L_B\circ R_B\big)=L_A\circ \big(R_A\circ L_B\big)\circ R_B=L_A\circ \big(L_B\circ R_A\big)\circ R_B= L_{AB}\circ R_{AB}
$$
and so the entourages $L_A\circ R_A$ with $A$ varying over bounded subsets of $G$ form a coarse structure on $G$.
Because $\ku E_L(G)$ is generated by the entourages $L_A$ and $\ku E_R(G)$ by the $R_B$ for $A$ and $B$ bounded and because $L_A\circ R_{B}=R_B\circ L_A$, we find, as was established in  \cite[Proposition 1]{MR4349657}, that $\ku E_\wedge(G)$ can equivalently be described by 
\begin{equation}\label{eq:Roelcke basis}
E\;\in\; \ku E_\wedge(G) \;\;\Leftrightarrow \;\; E\;\subseteq\; L_A\circ R_A  =\Con{(g,f)\in G\times G}{gA\cap A f\neq\emptyset}\text{ for some bounded set }A.
\end{equation}
It follows from this that,  if $B\subseteq G$ is bounded in the Roelcke coarse structure, we can find a $\ku E_L(G)$-bounded set $A$ so that $B\times \{1\}\subseteq L_A\circ R_A$, whereby $B\subseteq AA\inv$. Since also $AA\inv$ is $\ku E_L(G)$-bounded,we conclude that $B$ itself is $\ku E_L(G)$-bounded. Since the Roelcke coarse structure contains the left coarse structure, we conclude that, for any subset $B\subseteq G$, 
\maths{
B \text{ is $\ku E_L(G)$-bounded}
\quad \Leftrightarrow \quad B \text{ is $\ku E_R(G)$-bounded}
\quad \Leftrightarrow \quad B \text{ is $\ku E_\wedge(G)$-bounded}.
}
Again, this stresses the fact that the notion of bounded subsets of $G$ is really unambiguous.

If $d$ is a continuous left-invariant pseudometric on $G$, we define another continuous, though not necessarily left-invariant, pseudometric $d_\wedge$ by the formula
$$
d_{\wedge}(g,f)=\inf_{h\in G}\max\big\{d(g,h),\check d(h,f)\big\}.
$$
Because $d_\wedge\leqslant d, \check d$, we immediately see that $\ku E_d(G)\wedge \ku E_{\check d}(G)\subseteq \ku E_{d_\wedge}(G)$. Conversely, if $E\in \ku E_{d_\wedge}(G)$, we have $R=\sup_{(g,f)\in E}d_\wedge(g,f)<\infty$. Thus, if $(g,f)\in E$, there is some $h\in G$ for which $d(g,h)\leqslant R+1$ and $\check d(h,f)\leqslant R+1$, showing that
$$
E\;\subseteq\; \Con{(g,h)\in G\times G}{d(g,h)\leqslant R+1}\circ \Con{(h,f)\in G\times G}{\check d(h,f)\leqslant R+1}\;\in\; \ku E_d(G)\wedge \ku E_{\check d}(G).
$$
In other words, we have $\ku E_d(G)\wedge \ku E_{\check d}(G)=\ku E_{d_\wedge}(G)$ and therefore
$$
\bigcap_d\ku E_{d_\wedge}(G)=\bigvee_d\Big(\ku E_d(G)\wedge \ku E_{\check d}(G)\Big),
$$
where, as usual, the $d$ varies over all continuous left-invariant pseudometrics on $G$.

\begin{question}\label{RoelckeQuestion}
Characterize the topological groups $G$ for which
$$
\mathcal{E}_{\wedge}(G)\;=\;\Big( \bigvee_d\ku E_d(G)\big) \wedge \Big(\bigvee_d\ku E_{\check d}(G)\Big)
\;=\;\bigvee_d\Big(\ku E_d(G)\wedge \ku E_{\check d}(G)\Big)\;=\; \bigcap_d\ku E_{d_\wedge}(G).
$$
\end{question}
By our remarks above, only the middle equality is in question. The other two hold unconditionally.

The Roelcke uniformity introduced above admits an alternative description \cite{MR644485}. Namely, let $G$ be a topological group and let $\Delta=\con{(h,h)\in G\times G}{h\in G}$ denote the diagonal subgroup of $G\times G$. Observe that $\Delta$ is in general not normal in $G$.
The map $(g,f)\in G\times G\mapsto g\inv f\in G$ factors through the quotient $\Delta\backslash (G\times G)$, which produces a bijection
\maths{
\Delta\backslash (G\times G) &\maps \Theta G
}
given by $\Theta\big(\Delta(g,f)\big)=g\inv f$. If we equip $G\times G$ with its left-uniformity $\ku U_L(G\times G)$, which is nothing but the product uniformity $\ku U_L(G)\times \ku U_L(G)$, and $\Delta\backslash (G\times G)$ with the induced quotient uniformity $\ku U_q\big(\Delta\backslash (G\times G)\big)$, then $\Theta$ is a uniform homeomorphism when $G$ is given its Roelcke uniformity $\ku U_\wedge(G)$. We show that the same holds for the corresponding coarse structures too.

\begin{theorem}\label{thm:quotient-roelcke}
Let $G$ be a topological group. Then the map $\Delta\backslash (G\times G) \maps \Theta G$ defines a bijective coarse equivalence between the quotient coarse structure $\ku E_q\big(\Delta\backslash (G\times G)\big)$ and the Roelcke coarse structure $\ku E_\wedge(G)$.
\end{theorem}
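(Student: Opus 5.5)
We will show directly that $\Theta$ and $\Theta\inv$ map generating entourages to entourages, using the concrete descriptions already available on both sides. On the $\Delta\backslash(G\times G)$ side we use Lemma \ref{ConcreteBasisQuotientCoarseStructure}: the quotient structure has basis $L^{\Delta\backslash(G\times G)}_C$ with $C$ ranging over $\ku E_L(G\times G)$-bounded sets. On the $G$ side we use the basis $L_A\circ R_A$ from (\ref{eq:Roelcke basis}), with $A$ bounded in $G$.

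The first step is to note that the $\ku E_L(G\times G)$-bounded sets are exactly the subsets of products $A\times A$ with $A\subseteq G$ bounded. This follows from $\ku E_L(G\times G)=\ku E_L(G)\times\ku E_L(G)$ (products are preserved, cited in the introduction). A more elementary route is also available. Every continuous left-invariant pseudometric on $G\times G$ is dominated by $d_1(g_1,f_1)+d_2(g_2,f_2)$, where $d_i$ are its restrictions to the factors. Conversely, the projections are continuous homomorphisms and hence map bounded sets to bounded sets. So it suffices to take $C=A\times A$ with $A$ bounded, and we may assume $A=A\inv\ni 1$.

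Next we compute $\Theta\times\Theta$ on $L^{\Delta\backslash(G\times G)}_{A\times A}$. A pair $(\Delta(g_1,g_2),\Delta(f_1,f_2))$ lies in it iff there is $h\in G$ with $g_1\inv h f_1\in A$ and $g_2\inv h f_2\in A$. Put $x=g_1\inv g_2$ and $y=f_1\inv f_2$, which are the $\Theta$-images, and set $k=g_1\inv h f_1$. Then $g_2\inv h f_2 = x\inv k y$. So the condition becomes: there is $k\in A$ with $x\inv k y\in A$. Since $h$ ranges over all of $G$, $k$ does too. Thus
$$
(\Theta\times\Theta)\big[L^{\Delta\backslash(G\times G)}_{A\times A}\big]=\Con{(x,y)}{\exists k\in A:\ x\inv k y\in A}=\Con{(x,y)}{xA\cap Ay\neq\emptyset}\cup\ldots
$$
More precisely, $x\inv k y=a$ gives $ky=xa$, i.e.\ $Ay\cap xA\neq\emptyset$. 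Conversely, if $xa=ky$ with $a,k\in A$, then $x\inv k y=a\in A$. Hence the image is exactly $\Con{(x,y)}{xA\cap Ay\neq\emptyset}=L_A\circ R_A$.

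This gives both directions at once. The image of every basic quotient entourage is a basic Roelcke entourage, and every basic Roelcke entourage $L_A\circ R_A$ arises this way. Since $\Theta$ is a bijection and both structures consist of the subsets of their basic entourages, $\Theta$ and $\Theta\inv$ are controlled, so $\Theta$ is a bijective coarse equivalence. The only real obstacle is the first step, namely identifying the bounded sets of $G\times G$. Everything else is the algebraic change of variables $k=g_1\inv hf_1$, which is where care with left versus right multiplication is needed.
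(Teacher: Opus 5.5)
Your proof is correct and follows the paper's argument. The paper likewise establishes $L^{\Delta\backslash(G\times G)}_{A\times A}=(\Theta\times\Theta)^{-1}(L_A\circ R_A)$, going through $g_1Af_1^{-1}\cap g_2Af_2^{-1}\neq\emptyset$ rather than your substitution $k=g_1^{-1}hf_1$, and then combines Lemma \ref{ConcreteBasisQuotientCoarseStructure}, the basis \eqref{eq:Roelcke basis}, and the fact that bounded subsets of $G\times G$ are contained in products $A\times A$.

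The only real difference is in that last fact. The paper cites \cite[Lemma 3.36]{MR4327092}, whereas your bound $D\leqslant d_1(g_1,f_1)+d_2(g_2,f_2)$ gives a self-contained proof. You should also delete the stray ``$\cup\ldots$'' in your display.
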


\begin{proof}
Observe first that, for any $A\subseteq G$,
\maths{
L^{\Delta\backslash (G\times G)}_{A\times A}
&\;\;=\;\;   \COn{\big(\Delta(g_1,g_2),\Delta(f_1,f_2)\big)} {(g_1,g_2)\inv \Delta(f_1,f_2)\cap (A\times A)\neq \emptyset}\\
&\;\;=\;\;   \COn{\big(\Delta(g_1,g_2),\Delta(f_1,f_2)\big)} {g_1Af_1\inv \cap g_2Af_2\inv \neq\emptyset}\\
&\;\;=\;\;   \COn{\big(\Delta(g_1,g_2),\Delta(f_1,f_2)\big)} { g_1\inv g_2A\cap Af_1\inv f_2 \neq\emptyset}\\
&\;\;=\;\;   \COn{\big(\Delta(g_1,g_2),\Delta(f_1,f_2)\big)} { \Theta\big(\Delta(g_1,g_2)\big)\cdot A\;\cap \;A\cdot \Theta\big(\Delta(f_1,f_2)\big)\; \neq\;\emptyset}\\
&\;\;=\;\; (\Theta\times \Theta)\inv\big(L_A\circ R_A\big).
 }
Now, by \cite[Lemma 3.36]{MR4327092}, the bounded sets in $G\times G$ are those contained in products $A\times A$ of bounded subsets $A$ of $G$ and thus by Lemma \ref{ConcreteBasisQuotientCoarseStructure} the entourages $L^{\Delta\backslash (G\times G)}_{A\times A}$ are cofinal in the quotient structure $\ku E_q\big(\Delta\backslash (G\times G)\big)$. On the other hand, by  \eqref{eq:Roelcke basis}, the entourages $L_A\circ R_A$ are cofinal in $\ku E_\wedge(G)$ and hence the theorem follows from the equivalences above.
 \end{proof}

\begin{theorem}\label{thm:quotient-roelcke2}
Let $G$ be a topological group. Then the map $\Delta\backslash (G\times G) \maps \Theta G$ defines a bijective coarse equivalence between  $\ku E_{\rm Haus}\big(\Delta\backslash (G\times G)\big)$ and $\bigcap_d\ku E_{d_\wedge}(G)$.
\end{theorem}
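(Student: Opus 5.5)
The plan is to compare the two families of pseudometrics directly. I will show that, after transport by $\Theta$, the Hausdorff pseudometrics generated by a cofinal family of continuous left-invariant pseudometrics on $G\times G$ are exactly the pseudometrics $d_\wedge$. Intersecting the induced coarse structures then gives the theorem. This mirrors the proof of Theorem \ref{thm:quotient-roelcke}, with individual pseudometrics in place of bounded sets.

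\textbf{Step 1 (a cofinal family on $G\times G$).} For a continuous left-invariant pseudometric $d$ on $G$, set
$$
D_d\big((g_1,g_2),(f_1,f_2)\big)=\max\{d(g_1,f_1),d(g_2,f_2)\}.
$$
This is a continuous left-invariant pseudometric on $G\times G$. Conversely, let $D$ be any continuous left-invariant pseudometric on $G\times G$, and put
$$
d(a,b)=D\big((a,1),(b,1)\big)+D\big((1,a),(1,b)\big),
$$
which is continuous and left-invariant on $G$. Apply the triangle inequality through the intermediate point $(f_1,g_2)$. Then use left-invariance under $(1,g_2)$ and under $(f_1,1)$ to rewrite the two terms as $D((g_1,1),(f_1,1))$ and $D((1,g_2),(1,f_2))$. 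This gives $D\leqslant 2D_d$. Taking infima over $H=\Delta$ preserves this inequality, so $D_{\rm Haus}\leqslant 2(D_d)_{\rm Haus}$. Hence $\ku E_{\rm Haus}(\Delta\backslash(G\times G))=\bigcap_d\ku E_{(D_d)_{\rm Haus}}$.

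\textbf{Step 2 (the key identity).} I claim that
$$
(D_d)_{\rm Haus}\big(\Delta(g_1,g_2),\Delta(f_1,f_2)\big)=d_\wedge(x,y),\qquad x=g_1\inv g_2,\quad y=f_1\inv f_2.
$$
By Proposition \ref{prop:individual metrics}, the left side equals $\inf_{h}\max\{d(hg_1,f_1),d(hg_2,f_2)\}$. Substitute $u=g_1\inv h\inv f_1$, which ranges over all of $G$ as $h$ does. Left-invariance then turns the two terms into $d(1,u)$ and $d(1,x\inv u y)=d(x,uy)$, so the left side is $\inf_u\max\{d(1,u),d(x,uy)\}$.

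On the other side, $d_\wedge(x,y)=\inf_k\max\{d(x,k),\check d(k,y)\}$, and $\check d(k,y)=d(k\inv,y\inv)=d(yk\inv,1)$. Putting $k=uy$ gives $d(yk\inv,1)=d(u\inv,1)=d(1,u)$, so the two infima coincide. Thus $(D_d)_{\rm Haus}=d_\wedge\circ(\Theta\times\Theta)$, and consequently $(\Theta\times\Theta)[\ku E_{(D_d)_{\rm Haus}}]=\ku E_{d_\wedge}(G)$ for each $d$.

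\textbf{Step 3 (conclusion).} Intersecting over all $d$ and using Step 1 shows that $\Theta$ carries $\ku E_{\rm Haus}(\Delta\backslash(G\times G))$ onto $\bigcap_d\ku E_{d_\wedge}(G)$. Since $\Theta$ is a bijection, it is a bijective coarse equivalence.

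The only delicate point is the change of variables in Step 2, specifically checking that $\check d(k,y)$ matches $d(1,u)$ under left-invariance. The domination argument in Step 1 is routine but essential, since it is what reduces arbitrary pseudometrics on $G\times G$ to the product ones $D_d$.
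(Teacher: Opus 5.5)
Your proposal is correct and follows essentially the same route as the paper. Both arguments use the product pseudometric $\max\{d(g_1,f_1),d(g_2,f_2)\}$ and a change of variables to show that $\Theta$ is an isometry onto $d_\wedge$. Both then reduce to such product pseudometrics by cofinality. The one difference is that you prove the domination $D\leqslant 2D_d$ explicitly, whereas the paper only asserts that every continuous left-invariant pseudometric on $G\times G$ is bounded by one of this product form.
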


\begin{proof}
Assume $d$ is a continuous left-invariant pseudometric on $G$ and let $(d\times d)\big((g_1,g_2),(f_1,f_2)\big)=\max\{d(g_1,f_1),d(g_2,f_2)\}$, which is continuous left-invariant on $G\times G$. Moreover, by left-invariance and the change of variable $k=g_1\inv h\inv f_2$, we find that
\maths{
(d\times d)_{\rm Haus}\big(\Delta(g_1,g_2),\Delta(f_1,f_2)\big)
& =\inf_{h\in G}\;(d\times d)\big((hg_1,hg_2),(f_1,f_2)\big)\\
& =\inf_{h\in G}\;\max\big\{d(hg_1,f_1),d(hg_2,f_2)\big\}\\
& =\inf_{h\in G}\;\max\big\{d(hg_1,f_1),d(g_2,h\inv f_2)\big\}\\
& =\inf_{k\in G}\;\max\big\{d(f_2k\inv ,f_1),d(g_2,g_1k)\big\}\\
& =\inf_{k\in G}\;\max\big\{d(k\inv ,f_2\inv f_1),d(g_1\inv g_2,k)\big\}\\
& =\inf_{k\in G}\;\max\big\{\check d(k ,f_1\inv f_2),d(g_1\inv g_2,k)\big\}\\
& =d_\wedge (g_1\inv g_2 ,f_1\inv f_2)\\
& =d_\wedge \Big(\Theta\big( \Delta(g_1,g_2)\big),\Theta\big(\Delta(f_1,f_2)\big)\Big).
} 
 Thus, $\Theta$ is an isometry between the two pseudometrics $(d\times d)_{\rm Haus}$ and $d_\wedge$. 
 
 It thus suffices to note that every continuous left-invariant pseudometric $D$ on $G\times G$ is bounded by some pseudometric of the form $d\times d$, which implies that
 $$
 \ku E_{\rm Haus}\big(\Delta\backslash (G\times G)\big)\;=\; \bigcap_d\ku E_{(d\times d)_{\rm Haus}}
 $$
 and therefore that $\Theta$ is a  coarse equivalence between $ \ku E_{\rm Haus}\big(\Delta\backslash (G\times G)\big)$ and $\bigcap_d\ku E_{d_\wedge}(G)$.
 \end{proof}

\begin{corollary}
For any topological group $G$ we have that 
$$
\mathcal{E}_{\wedge}(G)\;\subseteq\;\bigcap_d\ku E_{d_\wedge}(G).
$$ 
\end{corollary}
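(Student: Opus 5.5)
The corollary follows by combining Theorem \ref{thm:quotient-roelcke} and Theorem \ref{thm:quotient-roelcke2} with Proposition \ref{prop:quotient coarse}, applied to the topological group $G\times G$ and its closed subgroup $\Delta$. First I check that $\Delta$ is closed in $G\times G$. This holds because $\Delta$ is the preimage of $\{1\}$ under the continuous map $(g,f)\mapsto g\inv f$, provided $\{1\}$ is closed, that is, $G$ is Hausdorff. If $G$ is not assumed Hausdorff, I note that the proof of Proposition \ref{prop:quotient coarse} never uses closedness; it only uses the inequality $d_{\rm Haus}(Hg,Hf)\leqslant d(g,f)$. So the inclusion holds for an arbitrary subgroup, with $d_{\rm Haus}$ read as a pseudometric. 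With this in place, Proposition \ref{prop:quotient coarse} gives
$$
\ku E_q\big(\Delta\backslash (G\times G)\big)\;\subseteq\;\ku E_{\rm Haus}\big(\Delta\backslash (G\times G)\big).
$$

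Next I transport both sides to $G$ through the single bijection $\Theta$. By Theorem \ref{thm:quotient-roelcke}, $\Theta$ carries $\ku E_q\big(\Delta\backslash (G\times G)\big)$ onto $\ku E_\wedge(G)$. By Theorem \ref{thm:quotient-roelcke2}, the same $\Theta$ carries $\ku E_{\rm Haus}\big(\Delta\backslash (G\times G)\big)$ onto $\bigcap_d\ku E_{d_\wedge}(G)$. For a bijection, being a bijective coarse equivalence between two coarse structures means that $(\Theta\times\Theta)$ maps the entourages of the first structure exactly onto those of the second; its inverse is also controlled, so entourages correspond in both directions. The inclusion of the preceding paragraph is therefore preserved under $\Theta\times\Theta$, and we obtain $\ku E_\wedge(G)\subseteq\bigcap_d\ku E_{d_\wedge}(G)$.

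There is no real obstacle here. The only care needed is to note that both theorems concern the same map $\Theta$, so the correspondence of entourages is literal and does not hold merely up to closeness. As a cross-check, the inclusion can also be verified directly. Let $A$ be bounded and suppose $gA\cap Af\neq\emptyset$, say $ga=bf$ with $a,b\in A$. Set $h=ga=bf$. Then $d(g,h)=d(1,a)$, and
$$
\check d(h,f)=d(h\inv,f\inv)=d(f\inv b\inv,f\inv)=d(b\inv,1).
$$
Both quantities are bounded uniformly over $a,b\in A$ because $A$ is bounded. Hence $d_\wedge$ is uniformly bounded on $L_A\circ R_A$ for every $d$. By \eqref{eq:Roelcke basis}, every entourage of $\ku E_\wedge(G)$ is contained in some $L_A\circ R_A$ with $A$ bounded, which confirms the corollary.
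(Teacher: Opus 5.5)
Your argument is correct and is the same as the paper's: take the inclusion $\mathcal{E}_q\big(\Delta\backslash(G\times G)\big)\subseteq\mathcal{E}_{\mathrm{Haus}}\big(\Delta\backslash(G\times G)\big)$ and transport both sides through the single bijection $\Theta$, using Theorems \ref{thm:quotient-roelcke} and \ref{thm:quotient-roelcke2}. Your two additions are also valid: the observation that closedness of $\Delta$ is never used, and the direct check that $d_\wedge$ is bounded on each $L_A\circ R_A$ with $A$ bounded, which is a concrete form of the elementary lattice inequality behind the remark before Question \ref{RoelckeQuestion}.
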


 \begin{proof}
 We have $\ku E_q\big(\Delta\backslash (G\times G)\big) \;\subseteq \; \ku E_{\rm Haus}\big(\Delta\backslash (G\times G)\big)$ and the bijection $\Theta\times \Theta$ maps $\ku E_q\big(\Delta\backslash (G\times G)\big)$ onto $\ku E_\wedge(G)$, while mapping $ \ku E_{\rm Haus}\big(\Delta\backslash (G\times G)\big)$ onto $\bigcap_d\ku E_{d_\wedge}(G)$.
 \end{proof}

 Considering the Roelcke coarse structure as an operator $\mathcal{E}_{\wedge}$, we can employ the basis given by Equation (\ref{eq:Roelcke basis}) to see it as a functor from the categories of topological groups to the category of coarse spaces. Indeed, if $G\maps\phi F$ is a continuous  group homomorphism between  topological groups, then for any  bounded subset $A\subseteq G$ we have that
\begin{equation}\label{eq:RoelckeFunctorial}
  (\phi\times \phi)[L_{A}\circ R_{A}]\subseteq L_{\phi[A]}\circ R_{\phi[A]},  
\end{equation}
which shows that $\phi$ is a controlled map between the Roelcke coarse structures on $G$ and $F$.

We also have that the Roelcke coarse structure preserves products. By \cite[Lemma 3.36]{MR4327092} we have that for a family of topological groups $\{G_{i}\}_{i\in I}$ the group bornology of bounded subsets of its product $G:=\prod_{i\in I}G_{i}$ has a basis made of arbitrary products $A=\prod_{i\in I}A_{i}$ of bounded subsets $A_{i}\subseteq G_{i}$. Consequently, for such products of the form $A=\prod_{i\in I}A_{i}$ we have that

\begin{equation}\label{eq:RoelckeProduct}
L_{A}\circ R_{A}=\bigg(\prod L_{A_i}\bigg)\circ\bigg(\prod R_{A_i}\bigg)=\prod_{i\in I}(L_{A_i} \circ R_{A_i}).    
\end{equation}
As the sets of the left-hand side of Equation \ref{eq:RoelckeProduct} are a basis for the Roelcke coarse structure on $G$ and the sets of the right-hand are a basis for the product coarse structure on $G$ coming from the Roelcke coarse structures on the $G_{i}$, we have that both coarse structures on $G$ coincide.

As expected, the question of whether the functor \(\mathcal{E}_{\land }\) preserves quotients yields both positive and negative results. We will prove that $\mathcal{E}_{\wedge}$ preserves quotients exactly when the functor $\mathcal{E}_{L}$ does, that is, when every bounded subset $B$ of a quotient topological group $G/H$ can be lifted to a bounded subset $A$ of the topological group $G$. 

Again, let $G$ be a topological group, $H$ a closed normal subgroup and $G\maps \pi G/H$ the quotient map. By  $\big(\mathcal{E}_{\wedge}\big)_q(G/H)$ we will denote the quotient coarse structure on $G/H$ associated to $\mathcal{E}_{\wedge}(G)$. We claim that the latter  is generated by the family of entourages
\begin{align}\label{GeneratingFamilyCoarseRoelcke}
\Con{L_{\pi[A]}\circ R_{\pi[A]}} {A\subseteq G\text{ is a bounded subset}}.    
\end{align}
Indeed, the quotient coarse structure $\big(\mathcal{E}_{\wedge}\big)_q(G/H)$ is generated by the images $(\pi\times \pi)[L_{A}\circ R_{A}]$ of  entourages $L_{A}\circ R_{A}\in \ku E_\wedge (G)$ and so, by Equation (\ref{eq:RoelckeFunctorial}), the entourages $L_{\pi[A]}\circ R_{\pi[A]}$ generate a coarse structure containing $\big(\mathcal{E}_{\wedge}\big)_q(G/H)$.
Conversely, for any bounded subset $A\subseteq G$, we have  $L_{A}\in \mathcal{E}_{\wedge}(G)$ and so $(\pi\times \pi)[L_A]=L_{\pi[A]}\in \big(\mathcal{E}_{\wedge}\big)_q(G/H)$. Similarly, $(\pi\times \pi)[R_A]=R_{\pi[A]}\in \big(\mathcal{E}_{\wedge}\big)_q(G/H)$ and hence $L_{\pi[A]}\circ R_{\pi[A]} \in \big(\mathcal{E}_{\wedge}\big)_q(G/H)$.

\begin{proposition}
    Let $H$ be a closed normal subgroup of a topological group $G$. Then 
    $$
    \big(\mathcal{E}_{\wedge}\big)_q(G/H)\;\subseteq\; \mathcal{E}_{\wedge}(G/H)
    $$ 
    and equality holds exactly when  bounded subsets of $G/H$ can be lifted to  bounded subsets of $G$.
\end{proposition}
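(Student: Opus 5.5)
The plan is to work throughout with the bases for the two coarse structures. By the discussion preceding the statement, $\big(\mathcal{E}_{\wedge}\big)_q(G/H)$ is generated by the entourages $L_{\pi[A]}\circ R_{\pi[A]}$ with $A\subseteq G$ bounded. By \eqref{eq:Roelcke basis} applied to the topological group $G/H$, the structure $\mathcal{E}_{\wedge}(G/H)$ consists of the sets contained in some $L_B\circ R_B$ with $B$ bounded in $G/H$. The analysis therefore reduces to comparing the families $\{\pi[A]\}$ and $\{B\}$.

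For the inclusion, I note that if $A\subseteq G$ is bounded, then $\pi[A]$ is bounded in $G/H$. Indeed, $\pi$ is a continuous homomorphism, so for every continuous left-invariant pseudometric $D$ on $G/H$, the pullback $D\circ(\pi\times\pi)$ is a continuous left-invariant pseudometric on $G$, and ${\sf diam}_D(\pi[A])={\sf diam}_{D\circ(\pi\times\pi)}(A)<\infty$. Hence each generator $L_{\pi[A]}\circ R_{\pi[A]}$ lies in $\mathcal{E}_{\wedge}(G/H)$. Since the latter is a coarse structure, it contains the structure generated by these entourages, which gives $\big(\mathcal{E}_{\wedge}\big)_q(G/H)\subseteq \mathcal{E}_{\wedge}(G/H)$. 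This is the same as applying functoriality \eqref{eq:RoelckeFunctorial} to $\pi$.

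For the equivalence, suppose first that every bounded $B\subseteq G/H$ equals $\pi[A]$ for some bounded $A\subseteq G$. Then every basic entourage $L_B\circ R_B$ of $\mathcal{E}_{\wedge}(G/H)$ is one of the generators $L_{\pi[A]}\circ R_{\pi[A]}$, so equality holds. Conversely, suppose the two structures are equal, and let $B\subseteq G/H$ be bounded. Then $B\times\{1\}\subseteq L_B\circ R_B$ (taking the witness $b\cdot 1\in B\cdot 1$, assuming $1\in B$ after enlarging $B$ if needed), so $B\times\{1\}$ belongs to $\big(\mathcal{E}_{\wedge}\big)_q(G/H)$. This means $B$ is bounded in the quotient coarse structure of the Roelcke structure on $G$.

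The key step, and the main obstacle, is to lift $B$ using this quotient boundedness. Being a coarse structure generated by the sets $L_{\pi[A]}\circ R_{\pi[A]}$ does not by itself give containment in a single generator. So I will first check that these generators are closed under the coarse operations up to cofinality, exactly as in the argument before \eqref{eq:Roelcke basis}. We have $\big(L_{\pi[A]}\circ R_{\pi[A]}\big)^{\sf T}=L_{\pi[A\inv]}\circ R_{\pi[A\inv]}$, unions are dominated by $A\cup A'$, and compositions satisfy
$$
\big(L_{\pi[A]}\circ R_{\pi[A]}\big)\circ \big(L_{\pi[A']}\circ R_{\pi[A']}\big)=L_{\pi[AA']}\circ R_{\pi[AA']},
$$
using commutation of $L$ and $R$ in $G/H$ and $\pi[A]\pi[A']=\pi[AA']$. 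Since bounded sets of $G$ form a group bornology, every entourage in $\big(\mathcal{E}_{\wedge}\big)_q(G/H)$ is contained in some $L_{\pi[A]}\circ R_{\pi[A]}$ with $A$ bounded. Applying this to $B\times\{1\}$, each $b\in B$ satisfies $b\pi[A]\cap\pi[A]\neq\emptyset$, so $B\subseteq \pi[A]\pi[A]\inv=\pi[AA\inv]$. Setting $A'=AA\inv\cap\pi\inv(B)$, which is bounded, yields $B=\pi[A']$. This completes the equivalence.
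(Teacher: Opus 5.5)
Your proof is correct and follows essentially the same route as the paper: the inclusion comes from functoriality of $\mathcal{E}_\wedge$ under $\pi$, the forward direction from comparing bases, and the converse from obtaining $B\subseteq \pi[AA^{-1}]$ out of a single generator $L_{\pi[A]}\circ R_{\pi[A]}$ (the paper tests the pair $(\xi^{-1},H)\in L_B\circ R_B$, whereas you test $(b,1)$ directly). You also make explicit two steps the paper leaves tacit: that the generators $L_{\pi[A]}\circ R_{\pi[A]}$ are cofinal in $\big(\mathcal{E}_{\wedge}\big)_q(G/H)$, and the final intersection with $\pi^{-1}(B)$ that gives $B=\pi[A']$ exactly.
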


\begin{proof}
The inclusion from left to right follows from $\pi$ being controlled between the Roelcke coarse structures on $G$ and $G/H$.

    If the lifting of bounded subsets from $G/H$ to $G$ can always be made, then the two coarse structures $\big(\mathcal{E}_{\wedge}\big)_q(G/H)$ and $\mathcal{E}_{\wedge}(G/H)$ share a common basis, and thus are equal.

    Conversely, suppose that $\big(\mathcal{E}_{\wedge}\big)_q(G/H)=\mathcal{E}_{\wedge}(G/H)$ and consider any bounded subset $B\subseteq G/H$ containing the identity $H=1_{G/ H}\in G/ H$. In order to lift $B$ to a bounded subset of $G$, we first use the supposed equality to imply the existence of a bounded subset $A\subseteq G$ for which
    $$
    L_{B}\circ R_{B}\subseteq L_{\pi[A]}\circ R_{\pi[A]}.
    $$
We claim that $B\subseteq \pi[AA\inv]$. Indeed, if $\xi\in B$, then, as $(H,H)\in R_{B}$, we have 
$$
(\xi^{-1},H)\;\in\; L_{B}\;\subseteq\; L_{B}\circ R_{B} \;\subseteq \; L_{\pi[A]}\circ R_{\pi[A]}
$$
and so we may find an $\eta\in G/H$ for which $(\xi^{-1},\eta)\in L_{\pi[A]}$ and $(\eta,H)\in R_{\pi[A]}$. Therefore, $\eta,\xi\eta\in \pi[A]$ and  $\xi\in \pi[AA\inv]$. Thus $B\subseteq \pi[AA\inv]$.
\end{proof}


\section{The Roelcke coarse structure on  homeomorphism groups}
Let \(S\) be a compact connected orientable surface without boundary and $\widetilde S\maps p S$
be the universal covering. Equip $\widetilde S$ with a compatible proper metric \(d\) invariant under deck transformations and let $d_\infty$ be the metric on 
\(\widetilde S\times\widetilde S\) given by 
\[
d_\infty\bigl((x,y),(x',y')\bigr)
=
\max\big\{d(x,x'),d(y,y')\big\}.
\]

Suppose $f,g\in {\sf Homeo}_0(S)$ and $\tilde f, \tilde g\in {\sf Homeo}_0(\widetilde S)$ are lifts of $f$ and $g$ respectively. Then any lift $\tilde a\in {\sf Homeo}_0(\widetilde S)$ of a homeomorphism $a\in {\sf Homeo}_0(S)$ defines a homeomorphism $\mathcal G{\tilde f}\maps {\varphi_{\tilde a}}\mathcal G{\tilde g}$ between the two graphs by the formula
$$
{\varphi_{\tilde a}}\big(z,\tilde f(z)\big)=\big(\tilde a(z),\tilde g\tilde a(z)\big).
$$
We say that a homeomorphism $\mathcal G{\tilde f}\maps {\varphi}\mathcal G{\tilde g}$ is {\em admissible} for $\tilde f$ and $\tilde g$ if $\varphi=\varphi_{\tilde a}$ for some such lift $\tilde a$. In general, if lifts $\tilde f$ and $\tilde g$ are fixed and $\mathcal G{\tilde f}\maps {\varphi}\mathcal G{\tilde g}$ is some homeomorphism, then since each coordinate projection identifies the graphs with $\widetilde S$, there is a unique homeomorphism $\vartheta\in {\sf Homeo}(\widetilde S)$ such that 
$$
\varphi\big(z,\tilde f(z)\big)=\big(\vartheta(z), \tilde g\vartheta(z)\big).
$$
Thus, $\varphi$ is admissible for $\tilde f$ and $\tilde g$ provided that this $\vartheta$ is a lift of some homeomorphism $a\in {\sf Homeo}_0(S)$. We now define a metric $\rho$ on ${\sf Homeo}_0(S)$ by the formula
\[
\rho(f,g)
=
\inf
\Bigg\{
\sup_{\zeta\in\mathcal G{\tilde f}}
d_\infty\big(\varphi(\zeta),\zeta\big)
\;\;\Bigg|\;\;
\begin{array}{l}
\mathcal G{\tilde f}\maps {\varphi}\mathcal G{\tilde g}\text{ is admissible for some lifts }
\tilde f, \tilde g\text{ of $f$ and $g$}
\end{array}
\Bigg\}.
\]

\begin{theorem}
The metric \(\rho\) generates the Roelcke coarse structure on
\(
{\sf Homeo}_0(S)
\).
\end{theorem}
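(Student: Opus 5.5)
The plan is to reduce the statement to the formula $\ku E_d(G)\wedge\ku E_{\check d}(G)=\ku E_{d_\wedge}(G)$ established above, applied to a single well-chosen left-invariant metric on $G={\sf Homeo}_0(S)$. For $a\in{\sf Homeo}_0(S)$ I will use the displacement norm
$$
|a|=\inf_{\tilde a}\,\sup_{z\in\widetilde S} d\big(\tilde a(z),z\big),
$$
where $\tilde a$ ranges over the lifts of $a$ to $\widetilde S$. Because $d$ is invariant under deck transformations, lifts of products are products of lifts up to deck transformations, and $S$ is compact, this should be a continuous, finite-valued, symmetric ($|a\inv|=|a|$) and subadditive norm. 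Consequently $d_L(f,g)=|f\inv g|$ is a continuous left-invariant pseudometric. Its dual is $\check d_L(h,g)=|hg\inv|$.

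The key external input I will invoke is that $G$ is locally bounded and that $d_L$ generates its left coarse structure, i.e.\ $\ku E_L(G)=\ku E_{d_L}(G)$. This is the known description of the large-scale geometry of ${\sf Homeo}_0$ of a compact manifold (Mann--Rosendal, via the fragmentation norm and its comparison with lift displacement). Granting this, $\ku E_R(G)=\ku E_{\check d_L}(G)$ by applying inversion. The computation preceding Question \ref{RoelckeQuestion} then gives
$$
\ku E_\wedge(G)=\ku E_{d_L}(G)\wedge\ku E_{\check d_L}(G)=\ku E_{(d_L)_\wedge}(G).
$$
So it remains to identify $\rho$ with $(d_L)_\wedge$, up to bounded error or exactly.

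For this identification I will unwind the definition of $\rho$. For an admissible $\varphi_{\tilde a}$ and $\zeta=(z,\tilde f(z))$ we have
$$
d_\infty\big(\varphi_{\tilde a}(\zeta),\zeta\big)=\max\big\{d(\tilde a z,z),\,d(\tilde g\tilde a z,\tilde f z)\big\}.
$$
Substituting $w=\tilde f z$ in the second term, the supremum over $\zeta$ becomes
$$
\max\Big\{\sup_z d(\tilde a z,z),\;\sup_w d\big(\tilde g\tilde a\tilde f\inv w,w\big)\Big\}.
$$
Taking the infimum over the lifts $\tilde f,\tilde g,\tilde a$ (and using deck-invariance to merge the lift choices) should give
$$
\rho(f,g)=\inf_{a\in G}\max\big\{|a|,\,|gaf\inv|\big\}.
$$
Setting $h=ga$, so that $a=g\inv h$, this becomes
$$
\rho(f,g)=\inf_{h}\max\big\{d_L(g,h),\,\check d_L(h,f)\big\}=(d_L)_\wedge(g,f).
$$
Hence $\ku E_\rho(G)=\ku E_{(d_L)_\wedge}(G)=\ku E_\wedge(G)$. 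As a byproduct, this should also confirm that $\rho$ is symmetric and a pseudometric.

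I expect the main obstacle to be the lift bookkeeping rather than the algebra. One must check that the admissible maps obtained as $\tilde a$ ranges over all lifts of all $a\in{\sf Homeo}_0(S)$ correspond exactly to the pairs ($h\in G$, lift of $h$). One must also check that the infimum over lifts of $f$, $g$, $a$ matches the infima hidden in $|\cdot|$ (for genus $\geqslant2$ the distinguished lift commuting with deck transformations simplifies this, while for the torus one must allow deck translations). If only the coarse statement $\ku E_L(G)=\ku E_{d_L}(G)$ is available, an inequality of the form $\rho\leqslant (d_L)_\wedge\leqslant\rho+C$ would suffice.
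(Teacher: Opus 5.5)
Your proposal is correct and follows essentially the paper's route. The paper's auxiliary metric $\rho'(f,g)=\inf\{\max\{\Delta(a),\Delta(b)\} : ga=bf\}$ is exactly your $(d_L)_\wedge$, justified by the same Mann--Rosendal/Militon input, and $\rho=\rho'$ is proved by the same unwinding of graphs of lifts. The lift bookkeeping you flag needs no genus distinction and gives exact equality, with no additive constant: for $\rho\geqslant\rho'$ note that $\tilde g\tilde a\tilde f\inv$ is a lift of $gaf\inv$, and for $\rho\leqslant\rho'$ take arbitrary lifts $\tilde a,\tilde b$ of $a$ and $b=gaf\inv$, any lift $\tilde f$ of $f$, and set $\tilde g=\tilde b\tilde f\tilde a\inv$.
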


\begin{proof}
For \(h\in {\sf Homeo}_0(S)\), define its minimal displacement on $\widetilde S$ by the formula
\[
\Delta(h)
=
\inf_{\tilde h}\;
\sup_{z\in\widetilde S}\;
d\big(\tilde h(z),z\big),
\]
where the infimum is taken over all lifts \(\tilde h\) of \(h\). Note that $\Delta(hg)\leqslant \Delta(h)+\Delta(g)$.
We claim that a subset \(B\subseteq {\sf Homeo}_0(S)\) is $\ku E_L$-bounded if and only if \(\Delta\) is bounded on \(B\). Indeed, by \cite[Theorem 1]{MR3846725}, the left coarse structure on  \(\operatorname{Homeo}_0(S)\) is given by the fragmentation metric and work of E. Militon \cite{MR3159168} identifies the fragmentation metric with displacement on the universal cover up to quasi-isometry.

As established earlier, the Roelcke coarse structure on \({\sf Homeo}_0(S)\) is generated by the entourages
\[
L_B\circ R_B=\Con{(f,g)\in {\sf Homeo}_0(S)\times {\sf Homeo}_0(S)}{ gB\cap Bf\neq\emptyset},
\]
where \(B\) ranges over the left-coarsely bounded subsets of \({\sf Homeo}_0(S)\). It thus follows that
\[
\rho'(f,g)
=
\inf\Con{\max\{\Delta(a),\Delta(b)\}}
{ga=b f,\; a,b\in {\sf Homeo}_0(S)}
\]
is a compatible metric for the Roelcke coarse structure.

We claim that $\rho\leqslant \rho'$. So assume $f,g, a,b\in {\sf Homeo}_0(S)$ are given such that $ga=bf$ and that $\tilde a$ and $\tilde b$ are lifts of $a$ and $b$ respectively. Let $\tilde f$ be any lift of $f$ and set $\tilde g=\tilde b \tilde f\tilde a\inv$, which is a lift of $g$. Let  $\mathcal G{\tilde f}\maps {\varphi_{\tilde a}}\mathcal G{\tilde g}$ be defined as above and note that
\maths{
\rho(f,g)
&\;\leqslant \;
\sup_{\zeta\in \mathcal G{\tilde f}}  d_\infty\big(\varphi_{\tilde a}(\zeta),\zeta\big)\\
&\;= \;
\sup_{z\in \widetilde S}  d_\infty\Big( \big(\tilde a(z),\tilde g\tilde a(z)\big), \big(z,\tilde f(z)\big)\Big)\\
&\;= \;
\sup_{z\in \widetilde S}  d_\infty\Big( \big(\tilde a(z),\tilde b\tilde f(z)\big), \big(z,\tilde f(z)\big)\Big)\\
&\;=\;
\sup_{z\in \widetilde S}\max\Big\{ d\big(\tilde a(z),z\big), d\big(\tilde b\tilde f(z),\tilde f(z)\big)\Big\}\\
&\;= \;
\max\Big\{ \sup_{z\in \widetilde S}d\big(\tilde a(z),z\big), \sup_{u\in \widetilde S}d\big(\tilde b(u),u\big)\Big\}.
}
By taking the infimum over all $a$ and $b$ along with their lifts $\tilde a$ and $\tilde b$, we find that $\rho(f,g)\leqslant \rho'(f,g)$.

We now establish the converse claim that $\rho'\leqslant \rho$. For this, assume $f,g, a\in {\sf Homeo}_0(S)$ are given and that $\tilde f$, $\tilde g$ and $\tilde a$ are lifts. We let $b=gaf\inv$ and set $\tilde b=\tilde g\tilde a\tilde f\inv$. We then have
\maths{
\rho'(f,g)
&\;\leqslant \;   \max\{\Delta(a),\Delta(b)\} \\
&\;\leqslant \;\max\Big\{ \sup_{z\in \widetilde S}d\big(\tilde a(z),z\big), \sup_{u\in \widetilde S}d\big(\tilde b(u),u\big)\Big\}\\
&\;=\;\max\Big\{ \sup_{z\in \widetilde S}d\big(\tilde a(z),z\big), \sup_{u\in \widetilde S}d\big(\tilde g\tilde a\tilde f\inv(u),u\big)\Big\}\\
&\;=\;\max\Big\{ \sup_{z\in \widetilde S}d\big(\tilde a(z),z\big), \sup_{z\in \widetilde S}d\big(\tilde g\tilde a(z),\tilde f (z)\big)\Big\}\\
&\;= \;\sup_{z\in \widetilde S}\; \max\Big\{ d\big(\tilde a(z),z\big), d\big(\tilde g\tilde a(z),\tilde f (z)\big)\Big\}\\
&\;= \;
\sup_{z\in \widetilde S} d_\infty\Big( \big(\tilde a(z),\tilde g\tilde a(z)\big), \big(z,\tilde f(z)\big)\Big)\\
&\;= \;
\sup_{\zeta\in \mathcal G{\tilde f} } d_\infty\big(\varphi_{\tilde a}(\zeta),\zeta\big).
}
Taking the infimum over $\tilde f$, $\tilde g$, $a$ and $\tilde a$, we find that $\rho'\leqslant \rho$ and so $\rho=\rho'$.
\end{proof}


\section{Positive results}
Our first positive result provides a complete characterization of when the left and quotient coarse structures coincide on $G/H$ provided the latter group is locally bounded.  For this, let us recall that a map $X\maps\phi Y$ between two coarse spaces is said to be {\em modest} if $\phi[A]$ is bounded in $Y$ whenever $A$ is a bounded subset of $X$. The specific setting that interests us is when $H$ is a closed normal subgroup of a topological group $G$ and $G\maps\pi G/H$ is the quotient map. In this case, a {\em section} $G/H\maps \phi G$ for $\pi$, that is a function such that $\pi\circ\phi={\sf id}_{G/H}$, will be said to be {\em modest} if it is modest as a map
$$
\big(G/H,\ku E_L(G/H)\big)\maps \phi \big(G,\ku E_L(G)\big).
$$
Of course, the existence of a modest section is generally stronger than every $\ku E_L(G/H)$-bounded set being the image of a bounded set from $G$, which by Corollary \ref{cor:lifting crit} itself is equivalent to the equality of the quotient and left coarse structures on $G/H$. We also remark that, if $G$ is a Polish group and $G/H\maps \phi G$ is a modest section for $\pi$, then one can find a C-measurable map $G/H\maps \psi G$ with $\ku G\psi\subseteq \overline{\ku G \phi}$ and this $\psi$ remains a modest section for $\pi$ \cite[Proposition 3.28]{MR4327092}. Here the algebra of $C$ sets in a Polish space is the smallest $\sigma$-algebra containing the Borel sets and closed under the Souslin operation $\ku A$.  Thus any modest section can be upgraded to be reasonably measurable.

\begin{theorem}
Suppose $H$ is a closed normal subgroup of a Polish group $G$ and assume that the quotient group $G/H$ is locally bounded or, equivalently, that $\ku E_L(G/H)$ is metrisable. Then the following are equivalent.
\begin{enumerate}
\item $\mathcal E_q(G/H)=\mathcal E_L(G/H)=\mathcal E_{\rm Haus}(G/H)$,
\item every $\mathcal E_L(G/H)$-bounded set $B$ is of the form $\pi[A]$ for some bounded $A\subseteq G$,
\item $H$ is  {\em $\sigma$-cobounded} in $G$, that is, $G=\bigcup_{n=1}^\infty HA_n$ for a sequence of bounded subsets $A_n\subseteq G$, 
\item there is a modest section $G/H\maps\phi G$ for the quotient map $\pi$.
\end{enumerate}
\end{theorem}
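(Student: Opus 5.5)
The plan is to close the cycle (1)$\Leftrightarrow$(2)$\Rightarrow$(3)$\Rightarrow$(4)$\Rightarrow$(2). The equivalence (1)$\Leftrightarrow$(2) is exactly Corollary \ref{cor:lifting crit}, because $\mathcal E_L(G/H)=\mathcal E_{\rm Haus}(G/H)$ holds by Proposition \ref{prop:quotient coarse}. The implication (4)$\Rightarrow$(2) is also immediate. If $\phi$ is a modest section and $B$ is bounded, then $A=\phi[B]$ is bounded in $G$ and $\pi[A]=B$.

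For (2)$\Rightarrow$(3) I will use local boundedness of $G/H$. Since $G/H$ is Polish and locally bounded, $\mathcal E_L(G/H)$ is metrisable by a compatible left-invariant metric $D$ whose balls $B_D(n)$ are bounded and exhaust $G/H$. By (2), each $B_D(n)=\pi[A_n]$ for some bounded $A_n\subseteq G$. Then $\pi\inv(B_D(n))=HA_n$, so $G=\bigcup_n HA_n$.

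For (3)$\Rightarrow$(4), I will first upgrade the cover so that every $\mathcal E_L(G/H)$-bounded set meets only finitely many pieces. The main obstacle is to show that each bounded set $B\subseteq G/H$ satisfies $B\subseteq \pi[A_1\cup\dots\cup A_m]$ for some $m$; this is a Baire-category argument. I will replace $A_n$ by $\overline{A_1\cup\dots\cup A_n}\,\cdot V$, where $V$ is a fixed bounded identity neighbourhood. Such a $V$ exists because, once (3) holds, $G=\bigcup_n HA_n$, so the sets $\pi[A_n]$ cover $G/H$ and are analytic. By Baire category some $\pi[\overline{A_n}]$ is nonmeagre, and a Pettis-type argument then shows that $\pi[A_nA_n\inv]$ contains an identity neighbourhood $W$ of $G/H$. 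Thus $G/H$ is covered by an increasing sequence of sets $\pi[C_n]$ with $C_n$ bounded and $\pi[C_n]\supseteq \pi[C_{n-1}]W$.

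Now let $B\subseteq G/H$ be bounded. By the standard characterisation of bounded sets in a Polish group (\cite{MR4327092}), there are a finite set $F$ and a number $k$ with $B\subseteq (FW)^k$. Since $F$ lies in some $\pi[C_m]$, the set $(FW)^k$ lies in $\pi[C_m^{k'}]$ for some $k'$. Each $C_m^{k'}$ is bounded, so after reindexing the cover by the products $C_m^{k'}$ we get the required finiteness.

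Given this, I define $\phi$ in the obvious way. For $\xi\in G/H$, let $n(\xi)$ be the least $n$ with $\xi\in\pi[A_n]$, and let $\phi(\xi)$ be any element of $A_{n(\xi)}\cap\pi\inv(\xi)$. If $B$ is bounded, then $B\subseteq\pi[A_1\cup\dots\cup A_m]$, so $n(\xi)\le m$ on $B$ and $\phi[B]\subseteq A_1\cup\dots\cup A_m$ is bounded. Hence $\phi$ is modest, which gives (4). If time permits, I would instead derive the finiteness step directly from the metrisation theorem of \cite{MR4327092} applied to $G/H$, with a word-metric-style gauge $\pi[V]\cup F$ playing the role of the generators.
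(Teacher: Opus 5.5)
Your proof is correct in substance and is essentially the paper's argument rearranged. The same Baire-category and Pettis step yields a bounded set in $G$ whose image contains an identity neighbourhood $W$ of $G/H$. The description $B\subseteq (FW)^k$ of bounded sets then lifts every bounded set, and the section is built by choosing representatives inside lifts of a countable cofinal family. The differences are that you run the cycle (2)$\Rightarrow$(3)$\Rightarrow$(4)$\Rightarrow$(2) instead of (3)$\Rightarrow$(2)$\Rightarrow$(4)$\Rightarrow$(3), and that you get countability from the cover $(A_n)$ rather than from an exhaustion of the locally bounded group $G/H$.

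One correction is needed. $V$ cannot in general be a bounded identity neighbourhood of $G$: take $H=G$ with $G$ not locally bounded, so that (3) holds trivially but no such neighbourhood exists. What your Pettis step actually gives, and all you use, is the bounded set $V=\overline{A_n}\,\overline{A_n}^{-1}$ with $W\subseteq\pi[V]$. With $C_m=\overline{A_1\cup\dots\cup A_m}$ you then get $B\subseteq (FW)^k\subseteq\pi[(C_mV)^k]$ directly. So the nesting $\pi[C_n]\supseteq\pi[C_{n-1}]W$ is unnecessary, which is just as well, since it does not follow from your definition of $C_n$.
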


\begin{proof}
Because $G/H$ is a locally bounded Polish group, it can be covered by a sequence $B_1\subseteq B_2\subseteq \ldots$ of $\mathcal E_L(G/H)$-bounded open subsets satisfying $B_n^2\subseteq B_{n+1}$ for all $n$. We fix this sequence and also let $B_0=\emptyset$. Then, by \cite[Proposition 2.15]{MR4327092}, a set $B$ is $\mathcal E_L(G/H)$-bounded  if and only if it is contained in some $B_n$. Observe that the equivalence of (1) and (2) is immediate from Corollary \ref{cor:lifting crit}.

(3)$\Rightarrow$(2):  Assume $G=\bigcup_{n=1}^\infty HA_n$ for a sequence of bounded subsets $A_n\subseteq G$. By the Baire category, some set $HA_n$ must be nonmeagre in $G$ and so the larger analytic set $H\overline{A_n}$ must be somewhere comeagre. Applying Pettis' lemma \cite[Lemma 2.1]{MR2535429}, we see that $H\overline{A_n}H\overline{A_n}=H\overline{A_n}^2$ must have nonempty interior and so, as $\pi$ is an open mapping,  $\pi\big[H\overline{A_n}^2\big]=\pi\big[\overline{A_n}^2\big]$ similarly has nonempty interior. Therefore, if $B\subseteq G/H$ is any $\ku E_L(G/H)$-bounded set, there is some finite set $E\subseteq G/H$ and some number $p$ so that
$B\subseteq \Big( E\pi\big[\overline{A_n}^2\big]\Big)^p$.
Writing $E=\pi[F]$ for some finite set $F\subseteq G$, we see that 
$$
B\subseteq \Big( E\pi\big[\overline{A_n}^2\big]\Big)^p=\Big( \pi[F]\pi\big[\overline{A_n}^2\big]\Big)^p=\pi\Big[\big(F\overline{A_n}^2\big)^p\Big].
$$
Thus, any $\ku E_L(G/H)$-bounded set is the image of a bounded set from $G$.

(4)$\Rightarrow$(3):  Suppose $G/H\maps\phi G$ is  a modest section for $\pi$. Then $A_n=\phi[B_n]$ is bounded in $G$ and $G=\bigcup_{n=1}^\infty HA_n$.

(2)$\Rightarrow$(4): Assume that (2) holds and find bounded sets $A_n\subseteq G$ so that $\pi[A_n]=B_n$. For every $x\in B_{n+1}\setminus B_n$, let $\phi(x)\in A_{n+1}\cap\pi\inv(x)$ be any element. Then, if $B$ is $\ku E_L(G/H)$-bounded, we have $B\subseteq B_n$ for some $n$ and so $\phi[B]\subseteq A_1\cup\cdots\cup A_n$, whereby $\phi[B]$ is bounded in $G$. So $\phi$ is modest. 
\end{proof}

\begin{theorem}\label{SecondMainCriterion}
Let $H$ be a closed normal subgroup of a Polish group $G$. Suppose there is an analytic transversal $T\subseteq G$ of the quotient map $G\maps \pi G/H$ for which the intersection $(T^{-2}T)\cap H$ is bounded in $G$. Then the section $G/H\maps \phi G$ for $\pi$ defined by $T$ is Borel measurable and modest with respect to the left coarse structures on $G/H$ and $G$. It follows that 
$$
\mathcal E_q(G/H)=\mathcal E_L(G/H)=\mathcal E_{\rm Haus}(G/H).
$$
\end{theorem}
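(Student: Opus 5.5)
The plan is to let $\phi(x)$ be the unique element of $T\cap\pi\inv(x)$, prove that $\phi$ is Borel, and then show that $\phi$ is a ``quasi-homomorphism'' with defect lying in the bounded set $K=(T^{-2}T)\cap H$. A Baire category argument then shows that $\phi$ sends a small neighbourhood of the identity in $G/H$ into a controlled set, and Rosendal's criterion for boundedness in Polish groups turns this into modesty.

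\emph{Borel measurability.} Since $H$ is closed, $G/H$ is Polish. The graph of $\phi$ is $\{(x,t)\in G/H\times G : t\in T,\ \pi(t)=x\}$, which is analytic because $T$ is analytic and $\pi$ is continuous. A function between Polish spaces with analytic graph is Borel: $\phi\inv(U)$ and its complement $\phi\inv(G\setminus U)$ are both projections of analytic sets, so Souslin's theorem applies.

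\emph{Quasi-homomorphism property.} For $x,y\in G/H$, the element $\phi(y)\inv\phi(x)\inv\phi(xy)$ lies in $T^{-1}T^{-1}T=T^{-2}T$, and it maps to $1$ under $\pi$, so it lies in $K$. Thus
$$
\phi(xy)\in\phi(x)\phi(y)K \qquad\text{for all } x,y\in G/H.
$$
Iterating gives $\phi(x_1\cdots x_m)\in\phi(x_1)\cdots\phi(x_m)K^{m-1}$.

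\emph{Modesty, which I expect to be the main obstacle.} The issue is to obtain control of $\phi$ on an open set without any a priori bounded lift. I will use the characterisation from \cite{MR4327092} that, in a Polish group, a set $A$ is bounded if and only if for every identity neighbourhood $V$ there are a finite set $F$ and an $n$ with $A\subseteq(FV)^n$; the same holds in $G/H$.

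Fix $V$ and choose an identity neighbourhood $V'$ with $V'^{-1}V'\subseteq V$. By separability, $G=\bigcup_k g_kV'$, so the Borel sets $C_k=\phi\inv(g_kV')$ cover $G/H$. By the Baire category theorem some $C_k$ is nonmeagre, and Pettis' lemma then gives an identity neighbourhood $W\subseteq C_k\inv C_k$ in $G/H$. For $w=x\inv y$ with $x,y\in C_k$ we have $\phi(y)=\phi(xw)\in\phi(x)\phi(w)K$, hence
$$
\phi(w)\in\phi(x)\inv\phi(y)K\inv\subseteq V'^{-1}g_k\inv g_kV'K\inv\subseteq VK\inv .
$$

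Now let $B\subseteq G/H$ be bounded. Then $B\subseteq(EW)^m$ for some finite $E$ and some $m$. Writing each $b\in B$ as $e_1w_1\cdots e_mw_m$ and using the iterated quasi-homomorphism property,
$$
\phi[B]\subseteq\big(\phi[E]\,VK\inv\big)^m K^{2m-1}.
$$
Since $K$ is bounded, $K\cup K\inv\subseteq(F_KV)^p$ for some finite $F_K$ and some $p$. Substituting this, the right-hand side is contained in $(F'V)^N$ for a finite $F'$ and some $N$. As $V$ was arbitrary, $\phi[B]$ is bounded, so $\phi$ is modest.

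\emph{Conclusion.} A modest section shows that every $\ku E_L(G/H)$-bounded set $B$ equals $\pi[\phi[B]]$, the image of a bounded subset of $G$. By Corollary \ref{cor:lifting crit} together with Proposition \ref{prop:quotient coarse}, this gives $\mathcal E_q(G/H)=\mathcal E_L(G/H)=\mathcal E_{\rm Haus}(G/H)$.
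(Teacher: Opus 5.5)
Your proposal is correct and follows essentially the same route as the paper: Borel measurability from the analytic graph, the quasi-homomorphism relation $\phi(xy)\in\phi(x)\phi(y)K$ with $K=(T^{-2}T)\cap H$, a Baire category and Pettis argument on the sets $\phi^{-1}(g_kV')$, and the $(FV)^n$ criterion for boundedness in Polish groups. The only cosmetic difference is that you use an identity neighbourhood inside $C_k^{-1}C_k$ (hence $V'^{-1}V'\subseteq V$ and the bound $\phi(w)\in VK^{-1}$), whereas the paper uses that $A^2$ has nonempty interior to write $B\subseteq(FA)^q$ and then bounds $\phi(a)\in c_nV$ directly.
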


\begin{proof}
To see that $G/H\maps \phi G$ is Borel measurable, by \cite[Theorem 14.12]{MR1321597}, it is enough to show that it has analytic graph
$$
\ku G\phi=\COn{\big(\pi(g),g\big)\in G/H\times G}{g\in T},
$$
which is immediate because $T$ is analytic. 
Observe also that, for all $x,y\in G/H$, we have 
$$
\phi(y)\inv\phi(x)\inv \phi(xy)\in (T^{-2}T)\cap \ker(\pi)=(T^{-2}T)\cap H
$$ 
and thus $\phi(xy)\in \phi(x)\phi(y)(T^{-2}T\cap H)$.

Next, to see that $\phi$ is modest, let $B\subseteq G/H$ be any given $\mathcal{E}_{L}(G/H)$-bounded set. We must show that also $\phi[B]$ is bounded in $G$. So let $V\subseteq G$ be an identity neighbourhood. Since $G$ is separable, pick a countable dense subset $\{c_{n}\}_{n\in\mathbb{N}}\subseteq G$, whence $G=\bigcup_{n\in \mathbb{N}}c_{n}V$ and so
$$
G/H=\bigcup_{n\in \mathbb{N}}\phi^{-1}(c_nV).
$$
By the Baire category theorem, there is an $n\in \mathbb{N}$ for which the set $A=\phi^{-1}(c_nV)$ is nonmeagre in $G/H$. Consequently, since $A$ also has the Baire property, we have that $A$ is somewhere comeagre. Also, by  Pettis' lemma \cite[Lemma 2.1]{MR2535429},  $A^2$ has nonempty interior and so, because $B$ is bounded, we can find a finite subset $F\subseteq G/H$ and a natural number $q$ for which $B\subseteq (FA)^{q}$. However, for all $f_i\in F$ and $a_i\in A$, we find that
\begin{align*}
  \phi(f_1a_1f_2a_2\,\cdots\, f_qa_q)
  &\;\in\;\phi(f_1)\phi(a_{1}f_2a_2\,\cdots\, f_{q}a_{q})(T^{-2}T\cap H)\\
  &\;\subseteq\;\phi(f_1)\phi(a_{1})\phi(f_2a_2\,\cdots\, f_{q}a_{q})(T^{-2}T\cap H)^{2}\\
  &\quad\vdots\\
  &\;\subseteq\; \phi(f_1)\phi(a_1)\cdots\phi(f_q)\phi(a_q)(T^{-2}T\cap H)^{2q},
\end{align*}
showing that $\phi[B]\subseteq \big(\phi[F](c_nV)\big)^{q}\big(T^{-2}T\cap H\big)^{2q}$.
Since, by hypothesis, the intersection $T^{-2}T\cap H$ is bounded in $G$, there is a finite subset $F'\subseteq G$ and an $m\in \mathbb{N}$ for which $(T^{-2}T\cap H)^{2q}\subseteq (F'V)^{m}$. Consequently, setting $F''=\phi[F]\cup\{c_n\}\cup F'$, we conclude that $\phi[B]\subseteq (F''V)^{2q+m}$. Because $V$ was arbitrary, this shows that $\phi[B]$ is bounded.
\end{proof}

\begin{proposition}\label{prop:FirstMainCriterion}
Suppose $H$ is a closed subgroup of a topological group $G$ for which there is a continuous left-invariant pseudometric $\partial$ on $G$ such that 
$$
\ku E_\partial(G)\upharpoonright H \;=\; \ku E_L(G)\upharpoonright H
$$
and let $G\maps \pi H\backslash G$ denote the quotient map. Then a subset $A\subseteq G$ is bounded if and only if 
${\sf diam}_\partial(A)<\infty$ and $\pi[A]$ is $\ku E_{\rm Haus}(H\backslash G)$-bounded. It follows that every $\ku E_{\rm Haus}(H\backslash G)$-bounded set $B$ is of the form $B=\pi[A]$ for some bounded subset $A\subseteq G$.
\end{proposition}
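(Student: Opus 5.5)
The forward direction is immediate: if $A$ is bounded, then ${\sf diam}_\partial(A)<\infty$ since $\partial$ is a continuous left-invariant pseudometric. Moreover $\pi[A]$ is $\ku E_q(H\backslash G)$-bounded, hence $\ku E_{\rm Haus}(H\backslash G)$-bounded by Proposition \ref{prop:quotient coarse}.

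For the converse, I fix $A$ with ${\sf diam}_\partial(A)<\infty$ and $\pi[A]$ Hausdorff-bounded, and an arbitrary continuous left-invariant pseudometric $d$ on $G$; the goal is ${\sf diam}_d(A)<\infty$. Replacing $d$ by $\max\{d,\partial\}$, I may assume $d\geqslant\partial$. Fix $a_0\in A$ and let $R=\sup_{a\in A}d_{\rm Haus}(Ha_0,Ha)$, which is finite by hypothesis. For each $a\in A$, Proposition \ref{prop:individual metrics} yields $h_a\in H$ with $d(h_aa_0,a)\leqslant R+1$. Since $d\geqslant\partial$, this also gives $\partial(h_aa_0,a)\leqslant R+1$, and so
$$
\partial(h_aa_0,a_0)\leqslant R+1+{\sf diam}_\partial(A).
$$
By left-invariance, $\partial(h_aa_0,a_0)=\partial(a_0^{-1}h_aa_0,1)$, though the relevant set should really sit inside $H$. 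I handle this by replacing $H$-elements with $H$-elements directly: $\partial(h_aa_0,a_0)$ differs from $\partial(h_a,1)$ by at most $2\partial(a_0,1)$. This follows from the triangle inequality, because $\partial(h_aa_0,h_a)=\partial(a_0,1)$. Hence $\{h_a\}_{a\in A}\subseteq H$ has finite $\partial$-diameter.

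This is the key step, and the main thing to verify. The hypothesis $\ku E_\partial(G)\upharpoonright H=\ku E_L(G)\upharpoonright H$ says that a subset of $H$ of finite $\partial$-diameter is $\ku E_L(G)$-bounded. Hence $K=\{h_a\}_{a\in A}$ is bounded in $G$, so ${\sf diam}_d(K)<\infty$. The subtle point is that $d$ was modified to dominate $\partial$ before the $h_a$ were chosen, while the conclusion about $K$ is independent of $d$. Then for $a,b\in A$,
$$
d(a,b)\leqslant d(a,h_aa_0)+d(h_aa_0,h_ba_0)+d(h_ba_0,b)\leqslant 2(R+1)+d(h_a,h_b),
$$
using right translation of $h_aa_0,h_ba_0$ by the fixed $a_0$? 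Right translation is not $d$-isometric, so instead I write $d(h_aa_0,h_ba_0)\leqslant d(h_aa_0,h_a)+d(h_a,h_b)+d(h_b,h_ba_0)=2d(a_0,1)+d(h_a,h_b)$. Thus ${\sf diam}_d(A)\leqslant 2(R+1)+4d(a_0,1)+{\sf diam}_d(K)<\infty$. Since $d$ was arbitrary, $A$ is bounded.

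For the final assertion, let $B$ be $\ku E_{\rm Haus}(H\backslash G)$-bounded and fix $Hg_0\in B$. Take $\partial$ itself as the pseudometric and use $r=\sup_{Hf\in B}\partial_{\rm Haus}(Hg_0,Hf)<\infty$. For each $Hf\in B$, choose a representative $f'\in Hf$ with $\partial(g_0,f')\leqslant r+1$; this is possible by Proposition \ref{prop:individual metrics}, using left-invariance to move $H$ onto $f$. Let $A$ be the set of these representatives. Then $\pi[A]=B$ and ${\sf diam}_\partial(A)\leqslant 2r+2$, so $A$ is bounded by the first part.
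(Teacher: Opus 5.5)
Your argument is correct and is essentially the paper's: you use the Hausdorff bound for a pseudometric $d\geqslant\partial$ to pick elements of $H$ close to the points of $A$, note that they form a $\partial$-bounded subset of $H$ and are therefore $\mathcal E_L(G)$-bounded, transfer $d$-boundedness back to $A$, and then build the lift from representatives whose $\partial$-distances are bounded. The differences are cosmetic: the paper argues the converse by contradiction along a sequence and bases the argument at the coset $H$ itself (choosing $h_n\in H$ with $d(h_n,a_n)<K+1$), which avoids your $a_0$-correction terms (these total $2d(a_0,1)$, not $4d(a_0,1)$); in the last step it takes all $g$ with $Hg\in B$ and $\partial(g,1)\leqslant\partial_{\rm Haus}(Hg,H)+1$ rather than one chosen representative per coset.
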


\begin{proof}
If $A\subseteq G$ is bounded, i.e., is $\ku E_L(G)=\bigcap_d\ku E_d(G)$ bounded,  then it in particular has finite $\partial$-diameter. Moreover, $\pi[A]$ will be $\ku E_q(H\backslash G)$-bounded by the definition of the quotient coarse structure and hence also bounded with respect to the larger coarse structure $\ku E_{\rm Haus}(H\backslash G)$.

Conversely, suppose $A\subseteq G$ has finite $\partial$-diameter and $\pi[A]$ is $\ku E_{\rm Haus}(H\backslash G)$-bounded. Assume for a contradiction that $A$ is unbounded in $G$ and find some continuous left-invariant pseudometric $d$ on $G$ so that ${\sf diam}_d(A)=\infty$. Without loss of generality, we may assume that $\partial \leqslant d$.  Choose $a_n\in A$ so that $\lim_nd(a_n,a_1)=\infty$  and note that, because $\pi[A]$ is $\ku E_{\rm Haus}(H\backslash G)$-bounded, 
we have $K=\sup_nd_{\rm Haus}(H,Ha_n)<\infty$. It follows that there are $h_n\in H$ so that $\partial(h_n,a_n)\leqslant d(a_n,h_n)<K+1$ for all $n$ and so 
$$
\sup_n\partial(h_n,a_1)\leqslant \sup_n\partial(h_n,a_n)+ \sup_n\partial(a_n,a_1)\leqslant K+1+ {\sf diam}_\partial(A)<\infty.
$$
That is, the set $\{h_n\}_n$ is a $\partial$-bounded subset of $H$ and must therefore be bounded in $G$ by our assumptions on $\partial$. In particular, it is $d$-bounded and so, as $\sup_nd(a_n,h_n)\leqslant K+1$, we find that also $\{a_n\}_n$ is $d$-bounded, contradicting the choice of the $a_n$.

Suppose now $B\subseteq H\backslash G$ is $\ku E_{\rm Haus}(H\backslash G)$-bounded. Define
\maths{
A
&=\COn{g\in G}{Hg\in B\quad\&\quad \partial(g,1)\leqslant \partial_{\rm Haus}\big(Hg,H\big)+1\,}.
}
Clearly $\pi[A]=B$. So to see that $A$ is bounded, it suffices to see that it has finite $\partial$-diameter. But $B$ is $\ku E_{\rm Haus}(H\backslash G)$-bounded and thus $\partial_{\rm Haus}$-bounded, whence 
$$
\partial(g,1)\;\leqslant\; \partial_{\rm Haus}\big(Hg,H\big)+1\;\leqslant\; \sup_{x\in B}\partial_{\rm Haus}(x,H)+1\;<\;\infty
$$
for all $g\in A$. Thus ${\sf diam}_\partial(A)<\infty$ as required.
\end{proof}

When $H$ is moreover normal in $G$,  we can conclude actual coincidence of coarse structures on the quotient $G/H$. 

\begin{theorem}\label{FirstMainCriterion}
Suppose $H$ is a closed normal subgroup of a topological group $G$ for which there is a continuous left-invariant pseudometric $\partial$ on $G$ such that 
$$
\ku E_\partial(G)\upharpoonright H \;=\; \ku E_L(G)\upharpoonright H.
$$
Then 
$$
\mathcal E_q(G/H)=\mathcal E_L(G/H)=\mathcal E_{\rm Haus}(G/H).
$$
\end{theorem}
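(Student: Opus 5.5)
The plan is to deduce this directly from Proposition \ref{prop:FirstMainCriterion} together with Corollary \ref{cor:lifting crit} and Proposition \ref{prop:quotient coarse}. No new construction should be needed: the homogeneous-space argument already produces bounded lifts, and normality only serves to turn this lifting statement into equality of coarse structures.

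First, since $H$ is a closed normal subgroup, Proposition \ref{prop:quotient coarse} gives $\mathcal E_L(G/H)=\mathcal E_{\rm Haus}(G/H)$. Here the Hausdorff coarse structure is taken with respect to the coset space $H\backslash G$, which is the same set as $G/H$ with the same quotient map $\pi$. This settles the second equality. It also shows that the bounded sets for $\mathcal E_L(G/H)$ are exactly the $\mathcal E_{\rm Haus}(H\backslash G)$-bounded sets.

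Next, the hypothesis on $\partial$ is exactly the hypothesis of Proposition \ref{prop:FirstMainCriterion}, applied to $H$ viewed merely as a closed subgroup. Its final clause gives the lifting property: every $\mathcal E_{\rm Haus}(H\backslash G)$-bounded set $B$ has the form $\pi[A]$ for some bounded $A\subseteq G$. By Corollary \ref{cor:lifting crit}, this lifting property is equivalent to $\mathcal E_q(G/H)=\mathcal E_L(G/H)=\mathcal E_{\rm Haus}(G/H)$. Underlying that corollary is Lemma \ref{QuotientLeftInvariant}: both coarse structures are left-invariant and connected, so each is determined by its bounded sets, and these bornologies coincide once bounded sets lift.

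I expect no real obstacle. The only point to check is that the identification $H\backslash G=G/H$ is compatible with all three structures, i.e.\ that $\mathcal E_{\rm Haus}$ and the quotient map are literally the same objects in both settings. This holds because $Hg=gH$ for normal $H$.
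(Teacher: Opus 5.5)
Your proposal is correct and matches the paper's proof, which is the one-line deduction combining the lifting clause of Proposition \ref{prop:FirstMainCriterion} with Corollary \ref{cor:lifting crit}. Your separate appeal to Proposition \ref{prop:quotient coarse} is harmless but redundant, since the equality $\mathcal E_L(G/H)=\mathcal E_{\rm Haus}(G/H)$ is already built into that corollary.
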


 \begin{proof}
 This follows directly from Corollary \ref{cor:lifting crit} and Proposition \ref{prop:FirstMainCriterion}.
 \end{proof}
 
The utility of Theorem \ref{FirstMainCriterion} is somewhat restricted by the fact that the pseudometric $\partial$ must be defined on all of $G$ and not only on the subgroup $H$. We do not know if such a weakening suffices in all cases, but we shall show that it does for the case when $G$ is a countable direct product of locally compact groups. This relies on  a fact of independent interest.

\begin{lemma}[Compactness criterion]\label{Compactness}
Let $\{\mathcal{E}_{n}\}_{n\in\mathbb{N}}$ be a sequence of connected coarse structures on a set $X$ for which the intersection $\bigcap_n\mathcal E_n$ is metrisable. Then, if  $E_{n}\in\mathcal{E}_{n}$ for all $n$, we have  
$$
E_{1}\cap\cdots\cap E_{k}\in\bigcap_{n\in\mathbb{N}}\mathcal{E}_{n}
$$
for some $k$.
It follows that if $B_n\subseteq X$ is $\mathcal E_n$-bounded for each $n$, then  
$$
B_1\cap \cdots \cap B_k
$$
is $\bigcap_n\mathcal E_n$-bounded for some $k$.
\end{lemma}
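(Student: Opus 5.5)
The plan is to argue by contradiction, using a single compatible metric for $\ku E=\bigcap_n\mathcal E_n$ to encode all of its entourages by one numerical parameter. So fix a metric (or pseudometric) $\rho$ on $X$ with $\ku E=\ku E_\rho(X)$. For $r\in\mathbb N$ put $D_r=\con{(x,y)}{\rho(x,y)\leqslant r}$. Then $D_r\in\ku E\subseteq \mathcal E_n$ for every $n$ and $r$, and a set $E\subseteq X\times X$ lies in $\ku E$ exactly when $E\subseteq D_r$ for some $r$.

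Now suppose, towards a contradiction, that $E_n\in\mathcal E_n$ are given but no finite intersection $E_1\cap\cdots\cap E_k$ lies in $\ku E$. Then for each $k$ the intersection $E_1\cap\cdots\cap E_k$ is not contained in $D_k$. Choose a pair
$$
(x_k,y_k)\in E_1\cap\cdots\cap E_k \quad\text{with}\quad \rho(x_k,y_k)>k,
$$
and let $F=\con{(x_k,y_k)}{k\in\mathbb N}$. This set has unbounded $\rho$-values, so $F\notin\ku E$.

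The main step is to show that nevertheless $F\in\mathcal E_n$ for every $n$, which contradicts $\ku E=\bigcap_n\mathcal E_n$. Fix $n$ and split $F$ into two parts.

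\emph{The tail.} The set $\con{(x_k,y_k)}{k\geqslant n}$ is contained in $E_n$, hence lies in $\mathcal E_n$, since the coarse structure is an ideal.

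\emph{The head.} The set $\con{(x_k,y_k)}{k<n}$ is finite. Each singleton $\{(x_k,y_k)\}$ lies in $\mathcal E_n$ by connectedness, and $\mathcal E_n$ is closed under finite unions (a union is a subset of a composite with the diagonal, or simply use the ideal property for finite unions).

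Hence $F\in\mathcal E_n$ for each $n$, so $F\in\bigcap_n\mathcal E_n=\ku E$, a contradiction. Connectedness is essential here, and is the only delicate point: it is what lets us absorb the finitely many initial pairs. The countability of the family and the choice of $D_k$ with radius growing to infinity make the diagonal argument work.

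For the bounded-set consequence, let $B_n$ be $\mathcal E_n$-bounded and apply the first part to $E_n=B_n\times B_n$. This gives some $k$ with
$$
(B_1\times B_1)\cap\cdots\cap(B_k\times B_k)=(B_1\cap\cdots\cap B_k)^2\in\ku E,
$$
which says precisely that $B_1\cap\cdots\cap B_k$ is $\ku E$-bounded.
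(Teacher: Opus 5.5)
Your proposal is correct and follows essentially the same argument as the paper: fix a metric $\rho$ for $\bigcap_n\mathcal E_n$, pick $(x_k,y_k)\in E_1\cap\cdots\cap E_k$ with $\rho(x_k,y_k)>k$, and show that the resulting set lies in every $\mathcal E_n$ by putting the tail inside $E_n$ and handling the finite head by connectedness. Your explicit derivation of the bounded-set consequence via $E_n=B_n\times B_n$ and $(B_1\times B_1)\cap\cdots\cap(B_k\times B_k)=(B_1\cap\cdots\cap B_k)^2$ correctly fills in the step the paper leaves as ``it follows''.
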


\begin{proof}
 Let $\rho$ be any metric on $X$ metrising the coarse structure $\bigcap_{n\in\mathbb{N}}\mathcal{E}_{n}$ and suppose for a contradiction that $E_n\in \ku E_n$ for all $n$,  but that $E_1\cap \cdots\cap E_k\notin \bigcap_{n\in \mathbb N}\ku E_n$ for all $k$. Then, for every $k$, there is some  $(x_k,y_k)\in E_{1}\cap\cdots\cap E_{k}$ with $\rho(x_k,y_k)>k$. It follows that 
 $$
\{(x_k,y_k)\}_{k\in\mathbb{N}}\;\notin\; \mathcal{E}_{\rho}\;=\;\bigcap_{n\in\mathbb{N}}\mathcal{E}_{n}
 $$ 
and so  $\{(x_k,y_k)\}_{k\in\mathbb{N}}\not\in \mathcal{E}_{n}$ for some $n$. However, by construction, $\{(x_k,y_k)\}_{k\geqslant n} \subseteq E_n \in \ku E_n$ and, as $\ku E_n$ is connected, we have $\{(x_1,y_1),\cdots,(x_{n-1},y_{n-1})\}\in \ku E_n$. It therefore follows that
$$
\{(x_k,y_k)\}_{k\in\mathbb{N}}\;=\; \{(x_1,y_1),\cdots,(x_{n-1},y_{n-1})\}\;\cup\; \{(x_k,y_k)\}_{k\geqslant n} \in \ku E_n,
$$
contradicting our assumptions.
\end{proof}

\begin{theorem}\label{thm:prod comp}
Let $G=\prod_{n=1}^\infty G_n$ be a product of locally compact, $\sigma$-compact groups $G_n$ and let $H$ be a closed normal subgroup of $G$. If $\ku E_L(G)\upharpoonright H$ is metrisable, then 
$$
\mathcal E_q(G/H)=\mathcal E_L(G/H)=\mathcal E_{\rm Haus}(G/H).
$$
\end{theorem}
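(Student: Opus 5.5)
The plan is to reduce the statement to Theorem \ref{FirstMainCriterion}. That means producing a single continuous left-invariant pseudometric $\partial$ on $G$ with $\ku E_\partial(G)\upharpoonright H=\ku E_L(G)\upharpoonright H$, using metrisability of $\ku E_L(G)\upharpoonright H$ together with the compactness criterion, Lemma \ref{Compactness}.

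\emph{Step 1.} Each $G_n$ is locally compact and $\sigma$-compact, so its left coarse structure is metrised by a continuous left-invariant metric $d_n$. I would take $d_n$ to be a word metric with respect to a compact symmetric generating identity neighbourhood, suitably continuised. Its bounded sets are exactly the relatively compact sets, and it is large-scale geodesic, so that $d_n$-balls of radius $R$ lie in bounded powers of the unit ball. Set $\partial_k(g,f)=\max_{n\leqslant k}d_n(g_n,f_n)$. This is a continuous left-invariant pseudometric on $G$, and $\partial_1\leqslant\partial_2\leqslant\cdots$. By \cite[Lemma 3.36]{MR4327092} the bounded subsets of $G$ are those contained in products of relatively compact sets, and $\ku E_L$ preserves products. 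Hence $\ku E_L(G)=\bigcap_k\ku E_{\partial_k}(G)$, and restricting to $H$,
$$
\ku E_L(G)\upharpoonright H=\bigcap_k\big(\ku E_{\partial_k}(G)\upharpoonright H\big),
$$
where each $\ku E_{\partial_k}(G)\upharpoonright H$ is a connected coarse structure on $H$. The left-hand side is metrisable by hypothesis.

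\emph{Step 2.} I would apply Lemma \ref{Compactness} to the sets $B_k=\con{h\in H}{\partial_k(h,1)\leqslant 1}$, each of which is $\ku E_{\partial_k}\upharpoonright H$-bounded. Since the $\partial_k$ increase, $B_1\cap\cdots\cap B_k=B_k$. The lemma therefore yields a $k$ for which the $\partial_k$-unit ball in $H$ is $\ku E_L(G)$-bounded. Because everything is left-$H$-invariant, the structures on $H$ are determined by their bounded sets. It then remains to show that $\con{h\in H}{\partial_k(h,1)\leqslant R}$ is $\ku E_L(G)$-bounded for every $R$, which gives $\ku E_{\partial_k}(G)\upharpoonright H\subseteq\ku E_L(G)\upharpoonright H$. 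The reverse inclusion is automatic. Taking $\partial=\partial_k$, Theorem \ref{FirstMainCriterion} then gives $\mathcal E_q(G/H)=\mathcal E_L(G/H)=\mathcal E_{\rm Haus}(G/H)$.

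\emph{Main obstacle.} The hard step is passing from boundedness of the $\partial_k$-unit ball of $H$ to boundedness of all $\partial_k$-balls of $H$. The geodesic property of $d_n$ holds in $G$ but not inside $H$, so the radius-$R$ ball of $H$ need not lie in a power of the radius-$1$ ball of $H$.

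My first attempt would avoid this altogether. I would apply the compactness lemma in its entourage form to a well-chosen diagonal sequence of radii. Argue by contradiction: if no $k$ works, then for each $k$ there is some $R_k$ with $\con{h\in H}{\partial_k(h,1)\leqslant R_k}$ unbounded. Replacing $\partial_k$ by the rescaled pseudometric $\partial'_k=\max_{j\leqslant k}R_j\inv\partial_j$ does not change $\ku E_{\partial_k}$, and it normalises all the offending radii to $1$. Provided the $\partial'_k$ still increase, so that their unit balls in $H$ are nested, Step 2 applied to the $\partial'_k$ gives the contradiction.

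If that fails, the fallback is to exploit local compactness. Every $\partial_k$-ball of $G$ is contained in the product of a compact set in the first $k$ coordinates with the full remaining factors $\prod_{n>k}G_n$. I would then try to cover the radius-$R$ ball of $H$ by finitely many left translates, by elements of $H$, of the unit ball of $H$, using compactness in the first $k$ coordinates.
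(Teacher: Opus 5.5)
Your reduction to Theorem \ref{FirstMainCriterion} matches the paper, and so does your use of Lemma \ref{Compactness} to make the $\partial_k$-unit ball $B_k$ of $H$ bounded. However, your first attempt at the step you flag as the main obstacle does not work. With $\partial'_k=\max_{j\leqslant k}R_j\inv\partial_j$ the pseudometrics do increase. But the $\partial'_k$-unit ball of $H$ is $\bigcap_{j\leqslant k}\con{h\in H}{\partial_j(h,1)\leqslant R_j}$, which is only a subset of the offending ball $\con{h\in H}{\partial_k(h,1)\leqslant R_k}$. The lemma then says that one of these finite intersections is bounded, and that contradicts nothing. If you use $R_k\inv\partial_k$ instead, you get the offending balls, but they are not nested, and the lemma again only controls the same finite intersections.

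In fact no argument using only Lemma \ref{Compactness} and monotonicity can succeed. On $H=\bigoplus_{a\in\mathbb N}\mathbb Z$, let $\mathcal B_k$ consist of the sets contained in some $F+\langle e_{a_1},\ldots,e_{a_r}\rangle$ with $F$ finite and all $a_i\geqslant k$. These are countably generated group bornologies, decreasing in $k$. Their intersection consists of the finite sets: a set in $\mathcal B_1$ is supported on finitely many coordinates, while a set in $\mathcal B_k$ has finite projection onto the coordinates below $k$. So the corresponding left-invariant coarse structures are metrisable with metrisable intersection, yet each $\mathcal B_k$ contains $\mathbb Z e_k$. Local compactness of the first $k$ factors has to be used.

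Your fallback is the right idea and is exactly the paper's argument; it only needs to be carried out. Let $W=\con{x\in\prod_{n\leqslant k}G_n}{\max_{n\leqslant k}d_n(x_n,1)<1}$. This is an open identity neighbourhood with $H\cap{\sf proj}_{\{1,\ldots,k\}}\inv(W)\subseteq B_k$.

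Suppose $B\subseteq H$ is $\partial_k$-bounded. Properness of the $d_n$ makes ${\sf proj}_{\{1,\ldots,k\}}[B]$ relatively compact. So its closure is covered by finitely many sets ${\sf proj}_{\{1,\ldots,k\}}(e)W$, with $e$ ranging over a finite set $E\subseteq B$. Given $b\in B$, choose such an $e$. Then $e\inv b\in H$ and ${\sf proj}_{\{1,\ldots,k\}}(e\inv b)\in W$, so $e\inv b\in B_k$. Thus $B\subseteq EB_k$ is bounded, and no geodesic property is needed. The paper does the same with $C=H\cap{\sf proj}_{\{1,\ldots,k\}}\inv(V_1\cdots V_k)$ for compact identity neighbourhoods $V_n$, obtaining $B\subseteq EC^2$.

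Finally, in Step 1 you cannot take word metrics, since a locally compact $\sigma$-compact group need not be compactly generated (e.g.\ $\mathbb Q$ with the discrete topology). Take instead the proper continuous left-invariant pseudometrics of Kakutani--Kodaira and Struble, as the paper does; only properness is used.
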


\begin{proof}
For simplicity of notation, we shall view all factors $G_n$ and subproducts $\prod_{n\in A}G_n$ as actual subgroups of $G$. Let \(G\maps{{\sf proj}_A}\prod_{n\in A}G_n\) denote the canonical projections. 

By Kakutani--Kodaira \cite{MR0014401} and Struble \cite{MR0348037}, there are continuous proper left-invariant pseudometrics $d_n$ on $G_n$. Indeed, choose a compact normal subgroup
$N\lhd G_n$ such that $G_n/N$ is metrisable, equip $G_n/N$ with a proper left-invariant compatible metric, and pull this metric back to $G_n$. It thus follows that $\ku E_L(G_n)=\ku E_{d_n}(G_n)$ and so by \cite[Section 3.5]{MR4327092}
$$
\ku E_L\big(G)=\ku E_L\big(\prod_{n=1}^\infty G_n)=\prod_{n=1}^\infty \ku E_L(G_n)=\prod_{n=1}^\infty \ku E_{d_n}(G_n).
$$
Here, $E\in \prod_{n=1}^\infty \ku E_{d_n}(G_n)$ if and only if $\big({\sf proj}_{\{n\}}\times {\sf proj}_{\{n\}}\big)[E]\in \ku E_{d_n}(G_n)$ for all $n$, that is, if $\sup_{(g,f)\in E}\;d_n\big({\sf proj}_{\{n\}}(g),{\sf proj}_{\{n\}}(f)\big)<\infty$ for all $n$.
For $g,f\in G$, let
$$
\partial_n(g,f)=d_n\big({\sf proj}_{\{n\}}(g),{\sf proj}_{\{n\}}(f)\big).
$$
So the $\partial_n$ are continuous left-invariant pseudometrics on $G$ and 
$$
\ku E_L\big(G)=\prod_{n=1}^\infty \ku E_{d_n}(G_n)=\bigcap_{n=1}^\infty \ku E_{\partial_n}(G).
$$

For each $n$, choose a compact identity neighbourhood $V_n\subseteq G_n$ and set
\[
C_n=H\cap {\sf proj}_{\{n\}}^{-1}(V_n),
\]
which is $\partial_n$-bounded. Since
$\mathcal E_L(G)\upharpoonright H$ is metrisable, Lemma \ref{Compactness} gives
$k$ such that
\[
C=C_1\cap\cdots\cap C_k=H\cap {\sf proj}_{\{1,\ldots, k\}}^{-1}(V_1\cdots V_k)
\]
is $\mathcal E_L(G)$-bounded. So let $V=V_1\cdots V_k$, which is a compact identity neighbourhood in the locally compact group $\prod_{n=1}^kG_n$.
Note that ${\sf proj}_{\{1,\ldots, k\}}[C]=V\cap {\sf proj}_{\{1,\ldots, k\}}[H]$ and put $\partial=\partial_1+\cdots+\partial_k$.

We claim that $\ku E_\partial(G)\upharpoonright H=\ku E_L(G)\upharpoonright H$, which by Theorem \ref{FirstMainCriterion} implies our result. For this, it suffices to check that every \(\partial\)-bounded subset \(B\subseteq H\) is \(\mathcal{E}_L(G)\)-bounded.  Since \(B\) is \(\partial\)-bounded and the pseudometrics
$d_1,\ldots,d_k$ are proper, ${\sf proj}_{\{1,\ldots, k\}}[B]$ has compact closure in $\prod_{n=1}^kG_n$. This implies that there is a finite set \(E\subseteq B\) such that
\maths{
{\sf proj}_{\{1,\ldots, k\}}[B]
&\subseteq {\sf proj}_{\{1,\ldots, k\}}[E]\cdot V.
}
However, as $E\subseteq B\subseteq H$, this implies that
\maths{
{\sf proj}_{\{1,\ldots, k\}}[B]
&\subseteq \Big({\sf proj}_{\{1,\ldots, k\}}[E]\cdot V \Big)\cap {\sf proj}_{\{1,\ldots, k\}}[H]\\
&\subseteq {\sf proj}_{\{1,\ldots, k\}}[E]\cdot \Big(V \cap {\sf proj}_{\{1,\ldots, k\}}[H]\Big)\\
&\subseteq {\sf proj}_{\{1,\ldots, k\}}[E]\cdot {\sf proj}_{\{1,\ldots, k\}}[C]\\
&={\sf proj}_{\{1,\ldots, k\}}[EC].
}
Therefore,
$$
B\subseteq \Big(EC\cdot {\sf ker}\big({\sf proj}_{\{1,\ldots,k\}}\big)\Big)\cap H 
=EC\cdot\Big( {\sf ker}\big({\sf proj}_{\{1,\ldots,k\}}\big)\cap H\Big) \subseteq EC^2.
$$
Because \(C\) is \(E_L(G)\)-bounded and \(E\) is finite, \(B\) is \(E_L(G)\)-bounded, proving our claim and thus the theorem.
\end{proof}


\section{Another counterexample}
\label{sect:BonetDierolf}

We finish by recording that Question \ref{quest:1} has a negative answer already among abelian Polish groups.  This is a direct coarse-geometric reformulation of a standard example of J. Bonet and S. Dierolf \cite[Example~1]{BonetDierolf1993}. For this, let us begin by recalling that, if $X$ is a Banach space, then norm-distance on $X$ is a compatible metric for the left coarse structure of the underlying additive topological group.

Let $Y\subseteq X$ be separable Banach spaces such that the inclusion is continuous, dense and non-closed. For example, one could take $X=\ell_p$ and $Y=\ell_q$ for some $1\leqslant q<p<\infty$. Consider the additive topological group 
$$
G=c_0(X)\times Y^{\mathbb N},
$$
where $c_0(X)$ is the Banach space of null sequences in $X$ and $Y^{\mathbb N}$ is given the product topology. Define
$$
G\maps \pi X^{\mathbb N},\qquad \pi(x,y)=x+y,
$$
where $x\in c_0(X)\subseteq X^\mathbb N$ and $y\in Y^{\mathbb N}\subseteq X^{\mathbb N}$.
The map $\pi$ is a continuous epimorphism. Indeed, if $z=(z_n)\in X^{\mathbb N}$, then density of $Y$ in $X$ lets us choose $y_n\in Y$ with $\|z_n-y_n\|_X<\tfrac 1n$, whereby $x=(z_n-y_n)\in c_0(X)$ and  $\pi(x,y)=z$. By the open mapping theorem for Fr\'echet spaces or for Polish groups, $\pi$ is an open mapping. Therefore, $
H={\sf ker}\,\pi\leqslant G$ is a closed subgroup and  the quotient group $G/H$ is homeomorphically isomorphic with $X^{\mathbb N}$.

Let $B_X$ and $B_Y$ denote the closed unit balls in respectively $X$ and $Y$. Because $Y$ is dense, but not closed in $X$, the topology on $Y$ is strictly finer than the topology induced from the ambient space $X$. It follows  that 
$$
B_X\cap Y\not\subseteq \tfrac 12B_X + mB_Y
$$
for all $m\geqslant 1$. Set 
$$
C\;=\;\prod_{n=1}^\infty \big(B_X\cap Y\big)\;\subseteq \;X^{\mathbb N},
$$
which by \cite[Lemma 3.36]{MR4327092} is bounded in the quotient group $X^{\mathbb N}$. Assume for a contradiction that $C\subseteq\pi[D]$ for some bounded subset $D\subseteq G=c_0(X)\times Y^{\mathbb N}$. By \cite[Lemma 3.36]{MR4327092} once again, we may assume that $D\subseteq c_0(X)\times \prod_{n=1}^\infty m_nB_Y$ for some constants $m_n$. Pick then $y_n\in \big(B_X\cap Y\big)\setminus \big(\tfrac 12B_X + m_nB_Y\big)$ for each $n\geqslant 1$ and set $y=(y_n)\in C$. By assumption, we can write 
$$
y=\pi(x,z)
$$
for some $x\in c_0(X)$ and $z\in \prod_{n=1}^\infty m_nB_Y$. Because $x\in c_0(X)$, there is some $n$ large enough so that $x_n\in \tfrac12 B_X$, whereby $y_n=x_n+z_n\in \tfrac 12B_X+m_nB_Y$, contradicting our assumptions. 

This shows that the bounded set $C\subseteq \prod_{n=1}^\infty \big(B_X\cap Y\big)$ cannot be lifted to a bounded set in $G$ and therefore, by Corollary \ref{cor:lifting crit}, that $\ku E_q(G/H)\neq \ku E_L(G/H)=\prod_{n=1}^\infty \ku E_L(X)$.


\section{Conflict of interest and data availability statements}
The authors attest that there is no conflict of interest to report. No data is associated with the paper.


\bibliographystyle{alpha} 
\bibliography{bib}
\end{document}